\theoremstyle{plain}
\newtheorem{theorem}{Theorem}[section]
\newtheorem{prop}[theorem]{Proposition}
\newtheorem{corollary}[theorem]{Corollary}
\newtheorem*{thmA}{Theorem A}
\newtheorem*{thmB}{Theorem B}
\newtheorem*{thmC}{Theorem C}
\theoremstyle{definition}
\newtheorem{definition}[theorem]{Definition}
\newtheorem{remark}[theorem]{Remark}
\newcommand\M{\mathcal{M}}
\begin{document}

\title[Functional calculus on non-doubling manifold with ends]{Functional calculus of operators 
with heat kernel bounds on non-doubling manifolds with ends}

\author{The Anh Bui}

\address{
T. A. Bui, Department of
Mathematics, Macquarie
University, NSW, 2109, Australia}
\email{the.bui@mq.edu.au}

\author{Xuan Thinh Duong}

\address{
X. T. Duong, Department of
Mathematics, Macquarie
University, NSW, 2109, Australia}
\email{xuan.duong@mq.edu.au}

\author{Ji Li}

\address{
J. Li, Department of
Mathematics, Macquarie
University, NSW, 2109, Australia}
\email{ji.li@mq.edu.au}

\author{Brett D. Wick}

\address{
B. D. Wick, Department of Mathematics, Washington University - St. Louis, St. Louis, MO 63130-4899 USA }
\email{wick@math.wustl.edu}

\begin{abstract}
Let $\Delta$ be the Laplace--Beltrami operator acting on a non-doubling manifold with two ends $\mathbb R^m \sharp \mathcal R^n$ with $m > n \ge 3$. 
Let $\frak{h}_t(x,y)$ be the kernels of the semigroup $e^{-t\Delta}$ generated by $\Delta$. We say that a non-negative self-adjoint operator $L$ on $L^2(\mathbb R^m \sharp \mathcal R^n)$ has a heat kernel with upper bound of Gaussian type if the kernel $h_t(x,y)$ of the semigroup $e^{-tL}$ satisfies $ h_t(x,y)  \le C \frak{h}_{\alpha t}(x,y)$ for some constants $C$ and $\alpha$. This class of  operators includes the Schr\"odinger operator $L = \Delta + V$ where $V$ is an arbitrary non-negative potential. We then obtain upper bounds 
of the Poisson semigroup kernel of $L$ together with its time derivatives and use them to show 
the weak type $(1,1)$ estimate for the holomorphic functional calculus $\frak{M}(\sqrt{L})$ where $\frak{M}(z)$ is a function of Laplace transform type.
Our result covers the  purely imaginary powers  $L^{is}, s \in \mathbb R$, as a special case and serves as a model case for weak type $(1,1)$ estimates of singular integrals with non-smooth kernels on non-doubling  spaces.
\end{abstract}


\subjclass[2010]{42B25}

\keywords{}

\maketitle{}

\bigskip

\section{Introduction} 

In the last fifty years, the theory of Calder\'on-Zygmund singular integrals has been a central part and  success story 
of modern harmonic analysis. This theory has had extensive influence on other fields of mathematics such as complex analysis 
and partial differential equations. 

Assume that $T$ is a bounded operator on the space $L^2(X)$ 
where $X$ is a metric space with a distance $d$  and a measure $\mu$. Also assume that  $T$ has an 
associated kernel $k(x,y)$ in the sense 
\begin{eqnarray}\label{assockernel} T f(x) = \int _X k(x,y) f(y) d\mu (y) 
\end{eqnarray}
for any continuous function $f$ with compact support and for $x$ not in the support of $f$.


The theory of Calder\'on-Zygmund singular integrals established sufficient conditions on the space $X$ and the associated 
kernel $k(x,y)$ for such an operator $T$ to be bounded on  $L^p(X)$ for $p \ne 2$. There are 2 key conditions:

$\bullet$ Doubling condition: a measure $\mu$ on the metric (or quasi-metric) space $X$
 is said to be doubling if there exists some positive constant $C$ such that
\begin{eqnarray}\label{doubling property}
0<\mu(B(x,2r))\leq C\mu(B(x,r))<+\infty
\end{eqnarray}
for all $x\in X$ and $r>0$, where $B(x,r)$ denotes the ball centered at $x$ and with radius $r>0$.

$\bullet$ H\"ormander condition: the associated kernel $k(x,y)$ is said to satisfy the (almost $L^1$) H\"ormander condition if
there exist positive constants $c$ and $C$ such that
\begin{eqnarray}\label{Hormander condition}
\int_{d(x, y_{1}) \ge c d(y_{1}, y_{2})} | k(x, y_1) - k(x, y_2) | d\mu (x)  \le C
\end{eqnarray}
uniformly of $y_1, y_2$.

Under the doubling condition (\ref {doubling property}) and the H\"ormander condition (\ref{Hormander condition}), 
it is well known that $T$ is of weak type $(1,1)$. By 
Marcinkiewicz interpolation, $T$ is bounded on $L^p(X)$ for $1 < p \le 2$. If the H\"ormander condition \eqref{Hormander condition} is satisfied with $x$ and $y$ swapped, then $T$ is bounded on $L^p(X)$ for $2 \le p <  \infty$.

While the theory of Calder\'on-Zygmund singular integrals has been a great success, there are still many important 
singular integral operators which do not belong to this class. Within  the last twenty years, there were two main directions
of development which study operators beyond this Calder\'on-Zygmund class.

$\bullet$ Singular integrals on non-homogeneous spaces: substantial progress has been made by 
F. Nazarov, S. Treil, A. Volberg, X. Tolsa, T. Hyt\"onen and others in showing that
many features of the classical Calder\'on-Zygmund theory still hold without assuming
the doubling property. More specifically, the doubling condition on $\mathbb{R}^d$ can be
replaced by the polynomial growth condition: for some fixed positive
constants $C$ and $n\in (0,d]$, one has
\begin{eqnarray}\label{growth condition}
\mu(B(x,r))\leq Cr^n \hskip.5cm \textup{for all } x\in \mathbb{R}^d, r>0.
\end{eqnarray}

If the measure $\mu$ satisfies the condition (\ref{growth
condition}), then the space $(\mathbb R^d,\mu)$ is called a
non-homogeneous space. Calder\'on--Zygmund theory has been
developed on such non-homogeneous spaces; see for example \cite{NTV1, NTV2, NTV3, To2}. For the BMO and $H^1$ function space,  the Littlewood--Paley theory,
 and weighted norm inequalities on such non-homogeneous spaces, see 
\cite{BD1,OP,To1,To3};  for Morrey
spaces, Besov spaces and Triebel-Lizorkin spaces in this setting, see \cite{DHY, HY, Sa}. See also \cite{BD1, Hy, HM, HYY} for recent work in this direction which studies a more general setting for non-homogeneous analysis on metric spaces $(X,d,\mu),$ where $(X,d)$ is said to be  geometrically doubling.

However, to obtain boundedness of singular integrals in this setting, one needs certain strong regularity
on the associated kernels in terms of the upper doubling measure, i.e., $r^n$ as in \eqref{growth condition} rather than $\mu(B(x,r))$. For example H\"older continuity on the space variables of the kernels
is needed for weak type $(1,1)$ estimate.

$\bullet$ Singular integrals with non-smooth kernels: A lot of work has been carried out to study singular integrals
whose associated kernels are not smooth enough to satisfy the H\"ormander condition. Substantial progress
has been made by X. Duong, A. McIntosh, S. Hofmann, L. Yan, J. Martell, P. Auscher, T. Coulhon  and others. 
 The H\"ormander condition was replaced by a weaker one to obtain the 
weak type $(1,1)$ estimates and to study function spaces associated with operators. 
See for example \cite{AM, CD1, DMc, DY1, DY2, HLMMY}. 
The achievements in this direction are mostly obtained for operators acting on doubling spaces. 

A natural question arises: How about singular integrals with non-smooth kernels on non-doubling spaces?
This is a difficult  and interesting problem when both the key conditions of Calder\'on-Zygmund theory are missing.
In this paper we study certain singular integrals with non-smooth kernels
acting on non-doubling spaces. Our model here is the holomorphic functional calculus of Laplace transform type
for operators with suitable heat kernel upper bounds such as the Schr\"odinger operator on a non-doubling manifold with two ends.

Let us recall manifolds with ends as in \cite{GS}. Let $M$ be a complete non-compact Riemannian manifold
and $K \subset M$ be a compact set with non-empty interior and smooth boundary such that $M\backslash K$
has $k$ connected components $E_1, \ldots, E_k$. We call $K$ the central part and for simplicity consider the case of two ends, i.e. $k=2$
so that $E_1$ is isometric to $\mathbb R^m$ and $E_2$ is isometric to $\mathcal R^n:=\mathbb R^n \times \mathbb S^{m-n}$
where $m > n \ge 3$ and $ \mathbb S^{m-n}$ is the unit sphere in $\mathbb R^{m-n}$. We denote 
the non-doubling manifold with two ends as $M = \mathbb R^m \sharp \mathcal R^n$ with distance $|x| = \sup_{z\in K} d(x,z)$
where $d(x,z)$ is the geodesic distance in $M$. One can see that $|x|$ is separated from zero in $M$ and 
$|x| \approx 1+ d(x, K)$ where $d(x, K) = \inf \{d(x,y: y\in K \}$.
It is easy to check that  $\mathbb R^m \sharp \mathcal R^n$ is non-doubling since
\begin{equation}\label{eq2s-functiona}
V(x,r) \approx \begin{cases}
r^m \ {\rm for \ all }\  x\in M, {\rm when }\  r \le 1 \\
r^n  \ {\rm for}\  B(x,r) \subset \mathcal R^n, {\rm when }\  r > 1 ; \ {\rm and}\\
r^m \ {\rm for} \ x\in \mathcal R^n \backslash K, \ r >  2|x| , {\rm or } \ x\in \mathbb R^m, r > 1
\end{cases}
\end{equation}
 where $V(x,r)$ is the measure of the ball $B(x,r)$.

In \cite{GS}, Grigor'yan and L. Saloff-Coste studied the kernels of the semigroup $e^{-t\Delta}$  generated by the Laplace-Beltrami
operator $\Delta$ on $\mathbb R^m \sharp \mathcal R^n$  and obtained by probabilistic methods the upper and lower bounds for the  kernels 
$\frak{h}_t(x,y)$ of $e^{-t\Delta}$ . 
However, no further information on $ \frak{h}_t(x,y)$ are known, 
for example we do not know if some (good) pointwise estimates on the time derivatives and space derivatives of $\frak{h}_t(x,y)$ exist. Indeed, the standard method of extending the Gaussian upper bound on the heat kernel with $t > 0$ to complex $z = t+ is$ in the case of doubling space like $\mathbb R^n$ or spaces of homogeneous type does not give a sharp upper bound for the complex heat kernel of the Laplace-Beltrami operator due to the missing of certain (sharp) global estimate such as the $L^1 - L^{\infty}$ estimate of heat semigroup in the setting of  $\mathbb R^m \sharp \mathcal R^n$  .

We view the heat kernel of the Laplace-Beltrami operator on 
$\mathbb R^m \sharp \mathcal R^n$ as the standard behaviour of heat diffusion which plays the important role of the Gaussian kernels on doubling spaces. We introduce the concept of 
heat kernels with upper bounds of Gaussian type as follows.

\begin{definition} Let $\Delta$ be the Laplace-Beltrami operator and $L$ be a non-negative self-adjoint operator on $L^2(\mathbb R^m \sharp \mathcal R^n)$. We say that the heat kernel of $L$ has an upper bound of Gaussian type  if the kernel $h_t(x,y)$ of $e^{-tL}$
	satisfies   $ h_t(x,y)  \le C \frak{h}_{\alpha t}(x,y)$ for some constants $C$ and $\alpha$ where $\frak{h}_t(x,y)$ is   the kernel of $e^{-t\Delta}$.
\end{definition}		
\begin{remark}	(a) See Theorem A, Section 2 for the (sharp) upper bound and lower bound for the heat kernels $\frak{h}_t(x,y)$.

	
	(b) The operators which have heat kernels with upper bounds of Gaussian type include
the Schr\"odinger operator $L = \Delta + V$ where $V$ is  a non-negative potential.
Indeed it follows from the Trotter formula that  
$$e^{-t(\Delta + V)} |f(x)| \le e^{-t\Delta} |f(x)| $$
for $f\in L^2(M)$.  Hence an upper bound for the kernel $\frak{h}_t(x,y)$ of $e^{-t\Delta}$ is also an upper bound for the kernel $h_t(x,y)$ of $e^{-t(\Delta + V)}$.
However, a lower bound for $\frak{h}_t(x,y)$ might not be  a lower bound for  $h_t(x,y)$.
       
(c) There is no assumption on the smoothness of the heat kernel in the definition of upper bound of Gaussian type. In the specific case of the Schr\"odinger operator, due to the effect of the non-negative potential $V$, it is possible that the kernel $h_t(x,y)$ 
of $e^{-t(\Delta + V)}$ is discontinuous
hence regularity estimates such as H\"older continuity are false for $h_t(x,y)$ in general.

\end{remark}

We note that in \cite{DLS}, the authors obtained the weak type $(1,1)$ estimates for the maximal operator $T(f)=\sup_{t > 0} \left\vert e^{-t\Delta}f\right\vert$
by using the upper bounds of the heat kernels $\frak{h}_t(x,y)$. The proof was a direct consequence of the sharp upper bounds on heat kernels 
in \cite{GS}. In \cite{CSY}, the authors obtained some estimates which showed that spectral multipliers for a function $\theta (z)$ with compact support
are bounded on $L^p $ spaces  on a space $X$ which includes the case of non-doubling manifolds with ends. While the result in \cite {CSY} is
applicable to  large class of underlying spaces $X$, the condition that the function $\theta (z)$ having compact support is quite restrictive. Indeed, 
the model case of the function $\theta(z) = z^{is}$, $s$ real, which gives rise to the purely imaginary power $\Delta ^{is}$ 
is not covered by the result  of \cite{CSY}. 
It came to our attention recently that the Riesz transform $\nabla \Delta^{-1/2}$ of the Laplace Beltrami operator $\Delta$ on $\mathbb R^m \sharp \mathcal R^n$ was proved by Carron \cite{Car} to be bounded on $L^p(\mathbb R^m \sharp \mathcal R^n)$ for $p_0 < p < n$ for some $p_0>1$, and after the first version of this article was completed, the $L^p(\mathbb R^m \sharp \mathcal R^n)$ boundedness of the Riesz transform $\nabla \Delta^{-1/2}$ for $1<p<n$, together with the weak type $(1,1)$ estimate, were obtained by Hassell and Sikora \cite{HS}.


The following theorem is our main result.

\begin{theorem}\label{main theorem}
Let  $L $ be an operator which has heat kernel with upper bounds of Gaussian type.
Let $\frak{M}(\sqrt L)$ be the holomorphic functional calculus of Laplace transform type of $\sqrt{L}$ defined by 
$$\frak{M}(\sqrt L)f  = \int_0^\infty [\sqrt L \exp(-t\sqrt L) f ] \tilde{m}(t) dt\, $$ 
in which $\tilde{m}(t)$ is a bounded function on $[0, \infty)$, i.e. 
$ |\tilde{m}(t)|\leq C_0, $ where $C_0$ is a constant.  Then $\frak{M}(\sqrt{L})$ is of weak type $(1,1)$. Hence by interpolation and duality, the operator $\frak{M}(\sqrt{L})$ is bounded on $L^p(\mathbb R^m \sharp \mathcal R^n)$ for $1 < p < \infty$.
\end{theorem}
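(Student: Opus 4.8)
The plan is to prove the weak type $(1,1)$ bound by a Calder\'on--Zygmund‐type argument in the spirit of the Duong--McIntosh treatment of operators with non-smooth kernels, using the Poisson semigroup $e^{-t\sqrt L}$ as the ``approximation to the identity'' and handling the non-doubling geometry of $M=\mathbb R^m\sharp\mathcal R^n$ by hand. The starting point is that $\frak{M}(\sqrt L)$ is bounded on $L^2$: by the spectral theorem $\frak{M}(\sqrt L)=G(\sqrt L)$ with $G(z)=\int_0^\infty z e^{-tz}\tilde m(t)\,dt$, and $|G(z)|\le C_0\int_0^\infty z e^{-tz}\,dt=C_0$ for $z>0$, so $\|\frak{M}(\sqrt L)\|_{L^2\to L^2}\le C_0$. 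Fix $f\in L^1(M)$ and $\lambda>0$. I would then produce a decomposition $f=g+\sum_i b_i$ at height $\lambda$ with $\operatorname{supp}b_i\subset B_i:=B(x_i,r_i)$, $\int b_i\,d\mu=0$, $\|b_i\|_1\le C\lambda\,\mu(B_i)$, $\sum_i\mu(B_i)\le C\lambda^{-1}\|f\|_1$, bounded overlap of $\{B_i\}$, and $\|g\|_\infty\le C\lambda$, $\|g\|_1\le C\|f\|_1$. Because $\mu$ is non-doubling this step needs care: one builds a Whitney-type covering of the level set of an appropriate maximal function and uses that $V(x,r)\approx r^m$ for $r\le1$, that $M$ restricted to either end away from $K$ is doubling, and that $V(x,r)\le Cr^m$ everywhere, so that the few ``transitional'' balls (radius $\sim1$ meeting $K$, or radius $\sim|x_i|$ in the $\mathcal R^n$ end) can be treated separately. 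The good part is then disposed of immediately: $\mu\{|\frak{M}(\sqrt L)g|>\lambda/3\}\le 9C_0^2\lambda^{-2}\|g\|_2^2\le C\lambda^{-2}\cdot\lambda\|g\|_1\le C\lambda^{-1}\|f\|_1$.

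For the bad part I would first invoke the kernel estimates supplied by the earlier sections: via the subordination formula $e^{-t\sqrt L}=\tfrac{t}{2\sqrt\pi}\int_0^\infty s^{-3/2}e^{-t^2/4s}\,e^{-sL}\,ds$, the Gaussian-type bound $h_s(x,y)\le C\frak{h}_{\alpha s}(x,y)$ and the explicit form of $\frak{h}_t$ in Theorem~A, one obtains pointwise upper bounds for the Poisson kernel $P_t(x,y)$ of $e^{-t\sqrt L}$ and for the kernel $t\,\partial_tP_t(x,y)$ of $t\sqrt L\,e^{-t\sqrt L}$; these reproduce the two-ends structure of $M$ but still satisfy $\int_M P_t(x,y)\,d\mu(y)\le1$ and a definite off-diagonal decay. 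For each $i$ set $t_i=r_i$ and write
\begin{equation*}
\frak{M}(\sqrt L)b_i=\frak{M}(\sqrt L)\big(I-e^{-t_i\sqrt L}\big)b_i+\frak{M}(\sqrt L)e^{-t_i\sqrt L}b_i .
\end{equation*}
From $\frak{M}(\sqrt L)(I-e^{-r\sqrt L})=\int_0^\infty\sqrt L\big(e^{-t\sqrt L}-e^{-(t+r)\sqrt L}\big)\tilde m(t)\,dt$ one gets, writing $K_r$ for its kernel and substituting $t\mapsto t+r$ in the second term,
\begin{equation*}
K_r(x,y)=-\int_0^r(\partial_tP_t)(x,y)\,\tilde m(t)\,dt+\int_r^\infty(\partial_tP_t)(x,y)\big(\tilde m(t-r)-\tilde m(t)\big)\,dt ,
\end{equation*}
and, using only $|\tilde m|\le C_0$ together with the bounds on $\partial_tP_t$, I would prove the integrated H\"ormander-type estimate
\begin{equation*}
\sup_{y\in M}\ \int_{d(x,y)\ge2r}|K_r(x,y)|\,d\mu(x)\le C,\qquad r>0 ,
\end{equation*}
which substitutes for the unavailable smoothness of the kernel of $\frak{M}(\sqrt L)$. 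This yields $\sum_i\|\frak{M}(\sqrt L)(I-e^{-t_i\sqrt L})b_i\|_{L^1(M\setminus2B_i)}\le C\sum_i\|b_i\|_1\le C\|f\|_1$, and since $\mu(\bigcup_i2B_i)\le C\lambda^{-1}\|f\|_1$, the first pieces contribute at most $C\lambda^{-1}\|f\|_1$ to the level set.

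For the second pieces, put $h=\sum_i e^{-t_i\sqrt L}b_i$. From $|e^{-t_i\sqrt L}b_i(x)|\le\|b_i\|_1\sup_{y\in B_i}P_{t_i}(x,y)$, the normalization $\int_M P_{t_i}(x,y)\,d\mu(y)\le1$ and the off-diagonal decay of the Poisson bound, one obtains $\|e^{-t_i\sqrt L}b_i\|_2^2\le C\lambda\|b_i\|_1$; combined with the bounded overlap of $\{2B_i\}$ and a Schur/almost-orthogonality estimate for the family $\{e^{-t_i\sqrt L}b_i\}$ (the tails being summable by the same decay), this gives $\|h\|_2^2\le C\lambda\|f\|_1$, whence $\mu\{|\frak{M}(\sqrt L)h|>\lambda/3\}\le 9C_0^2\lambda^{-2}\|h\|_2^2\le C\lambda^{-1}\|f\|_1$. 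Adding the three contributions yields $\mu\{|\frak{M}(\sqrt L)f|>\lambda\}\le C\lambda^{-1}\|f\|_1$, and the $L^p$ statement follows by Marcinkiewicz interpolation with the $L^2$ bound together with duality, since the adjoint $\frak{M}(\sqrt L)^*$ is again of Laplace transform type with bounded symbol $\overline{\tilde m}$.

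The step I expect to be the main obstacle is the integrated H\"ormander estimate on the non-doubling $M$: when one sums the off-diagonal bound for $K_r$ over dyadic annuli $2^kr\le d(x,y)<2^{k+1}r$, the volume ratios $V(y,2^{k+1}r)/V(x,2^kr)$ that appear need not be bounded --- in the $\mathcal R^n$ end they grow like a power of the distance once the annulus reaches scale $|y|$ --- so the Euclidean bookkeeping breaks down. Overcoming this requires exploiting the \emph{extra} decay present in the Grigor'yan--Saloff-Coste bounds for $\frak{h}_t$ (and hence, through subordination, for $P_t$ and $\partial_tP_t$) precisely when $x,y$ both lie in the small end and the time parameter is large, and organizing the $x$-integration according to which end $x$ lies in and to the size of $d(x,y)$ relative to $\max(|y|,r)$. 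The same two-ends bookkeeping must then be carried through consistently in the construction of the adapted Calder\'on--Zygmund decomposition and in the $L^2$ estimate for $h$.
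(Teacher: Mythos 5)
Your skeleton --- $L^2$ boundedness from $|G(z)|\le C_0$, a Calder\'on--Zygmund decomposition, smoothing the bad part by $e^{-t_i\sqrt L}$, an integrated H\"ormander estimate for the composite kernel, and an $L^2$ estimate for $\sum_i e^{-t_i\sqrt L}b_i$ --- is exactly the Duong--McIntosh framework the paper uses, and the subordination-based Poisson bounds you invoke are its Theorem 2.2. But the two steps you defer to ``two-ends bookkeeping'' are precisely where this scheme cannot be run globally on $(M,\mu)$, and the paper resolves them by changing the argument, not by refining the estimates. First, a single mean-zero decomposition on $M$ with $\|b_i\|_1\le C\lambda\,\mu(B_i)$, $\mu(\bigcup_i 2B_i)\le C\lambda^{-1}\|f\|_1$ and bounded overlap is not available off the shelf: the stopping-time inequality $\mu(B_i)^{-1}\int_{B_i}|f|\,d\mu\le C\lambda$ and the control of the dilated balls both use doubling at the transitional scales ($r\sim1$ near $K$, $r\sim|x_i|$ in the small end), and these are not ``a few'' exceptional balls. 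The paper instead splits $f=f_1+f_2+f_3$ by ends at the outset and performs an ordinary Euclidean Calder\'on--Zygmund decomposition inside each end, where the measure is genuinely doubling.

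Second, for the cross-end contributions (atom supported in one end, $x$ in the other) the cancellation of $b_i$ buys essentially nothing, since $d(x,y)\approx|x|+|y|$ is nearly constant over the support of $b_i$; the mechanism that works there is a pure size estimate. The paper proves $|k_{\frak{M}(\sqrt L)}(x,y)|\le C|x|^{-m}$ for $x\in\mathbb{R}^m\setminus K$, $y\in\mathcal{R}^n\setminus K$ (and $C|x|^{-n}$ in the reverse configuration), keeps the Whitney balls \emph{undilated} to avoid the non-doubling blow-up, and measures the sublevel set $\{|x|^m<C\lambda^{-1}\|f\|_1\}$ directly; the same size estimate, not smoothing, is what handles the cubes with a corner at the origin. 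Third, your claim that $\|e^{-t_i\sqrt L}b_i\|_2^2\le C\lambda\|b_i\|_1$ follows from normalization plus off-diagonal decay is not justified here: the on-diagonal bound $P_t(x,y)\le C/V(y,t)$ fails in the small end for $t\gg|y|$, and the non-local term $t\,|x|^{-(m-2)}|y|^{-(m-2)}(t+|x|+|y|)^{-(n+1)}$ of the Poisson bound is not dominated by $\mathcal M\chi_{B_i}$. The paper has to prove $L^2$ boundedness of the auxiliary operators $\mathcal G$ and $\mathcal T$ by a Schur-type computation using $m>n>2$, and needs $\sup_{z\in Q_i}|z|\approx\inf_{z\in Q_i}|z|$, which is exactly what forces the split of cubes into those touching the origin and those that do not. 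Until these three points are supplied, the proof is incomplete.
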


\begin{remark} (a) In Theorem \ref{main theorem} we prove the weak type $(1,1)$ estimate for $\frak{M}(\sqrt{L})$
for a function $\frak{M}$ of Laplace transform type.  While the 
$L^p$ boundedness of $\frak{M}(\sqrt{\Delta})$ for $1 < p < \infty$ can be obtained by the Littlewood--Paley theory \cite{Stein} or transference method \cite{Cowling}, the end-point weak $(1,1)$ estimate of $\frak{M}(L)$ is new even for the case when 
$L = \Delta$.  Our main result includes the operators $L^{is}$, $s$ real, as a special case
and it is a good example for  singular integrals acting on 
non-doubling spaces whose kernels do not satisfy the H\"ormander condition (\ref{Hormander condition}).

(b) By using the same approach and similar techniques in the proof of our main result, Theorem
\ref{main theorem}, we can also obtain the weak type $(1,1)$ estimate for the Littlewood--Paley square function defined via the Poisson semigroup generated by $L$ as follows:
$$ g(f)(x) = \bigg(\int_0^\infty \big| (t\sqrt{L})^\kappa e^{-t\sqrt{L}}(f)(x) \big|^2 {dt\over t}\bigg)^{1\over 2}, \quad\kappa\in\mathbb N, \kappa\geq 1.$$



 (c) In addition to standard techniques of harmonic analysis of real variables, there are two key elements in our method of proofs in this paper.

(i) Since the pointwise estimates on space and time derivatives of the semigroup  $e^{-tL}$ are not known, we overcome this problem
by using the subordination formula to obtain  upper bounds on the time derivatives of the kernel of Poisson semigroup  $ e^{-t\sqrt{L}}$
via the known upper bound for the kernel of the heat semigroup  $e^{-tL}$. Then we approach the holomorphic functional 
calculus of Laplace transform type $\frak{M}(\sqrt{L})$ through the Poisson semigroup $ e^{-t\sqrt{L}}$.

(ii) The standard Calder\'on--Zygmund decomposition on non-homogeneous spaces (such as \cite{NTV2, To2}) are not applicable to the proof for the weak type $(1,1)$  estimate of our singular integral $\frak{M}(\sqrt{L})$ because of lack of smoothness of its kernel. To overcome this problem, we 
use the technique of generalised approximation to the identity in \cite{DMc} to handle the local doubling part, then carry out a number of subtle decomposition and meticulous estimates to handle the case of non-doubling balls. It  turns out that the sharp upper bounds of the Poisson semigroup are sufficient for us to handle 
the blowing up of non-doubling volumes of balls and obtain the desired weak type $(1,1)$ estimate.

 The method in this paper relies only on good upper bounds on the Poisson semigroup kernel and its time derivatives which
can be derived from the heat semigroup kernels. It does not require, for example the contraction property of the semigroup 
on $L^p$ spaces, hence can be applied to other differential operators. We believe that our method can be developed further to study boundedness of singular integrals with non-smooth kernels acting on non-doubling spaces in other settings.
 
\end{remark}

\section{Poisson semigroup and its time-derivatives}
\setcounter{equation}{0}

Let $\Delta$ be the Laplace Beltrami operator acting on the manifold $\mathbb R^m \sharp \mathcal R^n$ and $\exp(-t\Delta)$ the heat propagator corresponding to $\Delta$. Here and throughout the whole paper, we use $ \mathbb{R}^m\backslash K $ to denote the large end of the manifold $\mathbb R^m \sharp \mathcal R^n$, $ \mathcal{R}^n\backslash K $ to denote the small end, and $K$ to denote the centre part of the manifold.

\begin{thmA}[\cite{GS}]\label{th GS}
The  kernel $\frak{h}_t(x,y)$ of $\exp(-t\Delta)$ satisfies the following estimates:

1. For $t\leq 1$ and all $x,y\in \mathbb R^m \sharp \mathcal R^n$,
$$ \frak{h}_t(x,y)\approx {1\over V(x,\sqrt{t})} \exp\Big(-c_0 {d(x,y)^2\over t} \Big); $$

2. For $t>1$ and all $x,y\in K $,
$$ \frak{h}_t(x,y)\approx {1\over t^{n/2}} \exp\Big(-c_0 {d(x,y)^2\over t} \Big); $$

3. For $t>1$ and  $x\in \mathbb{R}^m\backslash K $, $y\in K$,
$$ \frak{h}_t(x,y)\approx \left({1 \over t^{n/2} |x|^{m-2}}+ {1\over t^{m/2}}\right) \exp\Big(-c_0 {d(x,y)^2\over t} \Big); $$

4. For $t>1$ and  $x\in \mathcal{R}^n\backslash K $, $y\in K$,
$$ \frak{h}_t(x,y)\approx \left({1 \over t^{n/2} |x|^{n-2}}+ {1\over t^{n/2}}\right) \exp\Big(-c_0 {d(x,y)^2\over t} \Big); $$

5. For $t>1$ and  $x\in \mathbb{R}^m\backslash K $, $y\in \mathcal{R}^n\backslash K $,
$$ \frak{h}_t(x,y)\approx \left({1 \over t^{n/2} |x|^{m-2}}+ {1\over t^{m/2}|y|^{n-2}}\right) \exp\Big(-c_0 {d(x,y)^2\over t} \Big); $$

6. For $t>1$ and  $x,y\in \mathbb{R}^m\backslash K $,
$$ \frak{h}_t(x,y)\approx {1 \over t^{n/2} |x|^{m-2}|y|^{m-2}} \exp\Big(-c_0 {|x|^2+|y|^2\over t} \Big) + {1\over t^{m/2}} \exp\Big(-c_0 {d(x,y)^2\over t} \Big); $$

7. For $t>1$ and  $x,y\in \mathcal{R}^n\backslash K $,
$$ \frak{h}_t(x,y)\approx {1 \over t^{n/2} |x|^{n-2}|y|^{n-2}} \exp\Big(-c_0 {|x|^2+|y|^2\over t} \Big) + {1\over t^{n/2}} \exp\Big(-c_0 {d(x,y)^2\over t} \Big). $$

\end{thmA}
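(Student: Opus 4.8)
Since Theorem~A is quoted from \cite{GS}, I only indicate the strategy I would follow. The estimates split by time scale, and the two-sided bounds rest on rather different mechanisms. For \emph{small time} ($t \le 1$): away from the compact seam $K$ the manifold is isometric to flat $\mathbb R^m$ or to the Riemannian product $\mathbb R^n \times \mathbb S^{m-n}$, so $M = \mathbb R^m \sharp \mathcal R^n$ has bounded geometry: there is $\varepsilon_0 > 0$ so that all balls of radius $r \le \varepsilon_0$ satisfy $V(x,r) \approx r^m$, the local volume doubling property, and a uniform $L^2$-Poincar\'e inequality. By the equivalence (Grigor'yan, Saloff-Coste, Sturm) between the parabolic Harnack inequality and the conjunction of local volume doubling and the local Poincar\'e inequality, one gets the Li--Yau estimate $\frak{h}_t(x,y) \approx V(x,\sqrt t)^{-1}\exp(-c_0 d(x,y)^2/t)$ for $0 < t \le 1$, which is estimate~1.

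For \emph{large time} ($t > 1$) I would use the gluing/parametrix machinery of \cite{GS} for manifolds with ends. Write $E_1 = \mathbb R^m \setminus K$, $E_2 = \mathcal R^n \setminus K$, with exponents-at-infinity $d_1 = m$, $d_2 = n$. Three building blocks are needed. (i) \emph{Dirichlet heat kernels on the ends.} Each $E_i$ isometrically embeds into its model manifold, whose heat kernel is essentially explicit --- a Gaussian on $\mathbb R^m$, and a Gaussian on $\mathbb R^n$ times the sphere heat kernel (which is $\approx 1$ once $t \gtrsim 1$) on $\mathcal R^n$ --- and, combined with exterior-of-a-ball Dirichlet estimates, this produces the factors $\min(1,|x|^{2-d_i})$ together with the Gaussian/volume factors appearing in 5--7. (ii) \emph{Equilibrium potential.} Since $m,n \ge 3$, both ends are non-parabolic, so $u_i(x) := \mathbb{P}_x(\text{Brownian motion ever hits } K)$ is not identically $1$; the Green-function bounds on $\mathbb R^m$ ($\approx |x-y|^{2-m}$) and on $\mathcal R^n$ ($\approx |x-y|^{2-n}$ for $|x-y| \gtrsim 1$) give $u_i(x) \approx |x|^{2-d_i}$. (iii) \emph{Heat kernel in the seam.} For $t > 1$ and $x,y \in K$ one shows $\frak{h}_t(x,y) \approx t^{-n/2}$ (estimate~2): the on-diagonal decay in $K$ is governed by the \emph{slower}-spreading end $\mathcal R^n$, because $\int_K \frak{h}_t(x,z)\,d\mu(z)$ is comparable to the probability that a particle launched into and returning from $\mathcal R^n$ is back in $K$ at time $t$. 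This ``bottleneck'' exponent $n/2$, rather than $m/2$, is the characteristic feature of the two-ended geometry.

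With these blocks in hand, for $x \in E_i$, $y \in E_j$, $t > 1$ one uses the first-passage decomposition of Brownian paths according to whether they avoid $K$ or first enter it at time $s \in (0,t)$,
$$\frak{h}_t(x,y) = \delta_{ij}\, p^{E_i}_t(x,y) + \int_0^t\!\!\int_{\partial K} k_s(x,\sigma)\, \frak{h}_{t-s}(\sigma,y)\, d\sigma\, ds,$$
where $k_s(x,\sigma)\,d\sigma\,ds$ is the joint law of the first hitting time of $K$ and the hitting point. The hitting-time density carries a factor $|x|^{2-d_i}$ from (ii) and a Gaussian suppression $\exp(-c|x|^2/s)$ that is severe for $s \ll |x|^2$; inserting (i)--(iii), using $\frak{h}_{t-s} \lesssim \frak{h}_{ct}$ up to constants on $s \le t/2$ and symmetry on $s \ge t/2$, and evaluating the $s$-integral --- which is dominated by $s \approx t$, or by $s \approx |x|^2$ when $|x|^2 \gtrsim t$, a regime already Gaussian-small --- reproduces the ``through-the-seam'' terms $t^{-n/2}|x|^{2-d_i}|y|^{2-d_j}\exp(-c_0(|x|^2+|y|^2)/t)$ plus the ``direct'' terms $p^{E_i}_t(x,y)$, which are precisely the shapes in 3--7. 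The matching lower bounds are obtained instead by restricting the path integral to one typical scenario (first hit $K$ in a time window of size $\approx t$ after an essentially radial excursion) and chaining the small-time parabolic Harnack inequality over a bounded number of balls to propagate positivity from $x$ to a point of $K$ to $y$; non-parabolicity again supplies the lower bound $u_i(x) \gtrsim |x|^{2-d_i}$.

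The essential difficulties are twofold. First, establishing the seam bound (iii), and more broadly deciding, in each geometric regime, which of the two competing mechanisms --- diffusing within a single end versus traversing the bottleneck at $K$ --- dominates, and patching the resulting local models together consistently across the seam. Second, the lower bounds: these demand explicit families of Brownian paths (equivalently, Harnack chains at the correct scales) realising each claimed estimate, especially in the regimes where the ``through-the-seam'' term is the main one, and this is exactly where the probabilistic methods of \cite{GS} become indispensable.
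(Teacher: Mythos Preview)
Your proposal is appropriate: the paper does not prove Theorem~A at all --- it is stated purely as a citation from \cite{GS}, with no argument given --- and you correctly flag this at the outset. Your sketch of the small-time Li--Yau/parabolic-Harnack argument and the large-time gluing via Dirichlet end-kernels, hitting probabilities $u_i(x)\approx|x|^{2-d_i}$, and the bottleneck on-diagonal rate $t^{-n/2}$ in $K$ is a faithful outline of the Grigor'yan--Saloff-Coste method, so there is nothing to compare against in the present paper.
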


We now recall the following result.
\begin{thmB}[\cite{DLS}]
Let $T$ be the maximal operator defined by $T(f)(x):=\sup\limits_{t>0}
|\exp(-t\Delta)f(x)|$. Then $T$ is weak type $(1,1)$ and for any
function $f\in L^p(\mathbb R^m \sharp \mathcal R^n)$, $1<p\leq\infty$, the following estimates hold
$$
\|Tf\|_{L^p(\mathbb R^m \sharp \mathcal R^n)} \le C  \|f\|_{L^p(\mathbb R^m \sharp \mathcal R^n)}.
$$
\end{thmB}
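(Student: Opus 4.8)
\emph{Proof proposal.} The case $p=\infty$ is immediate: since $\mathbb R^m \sharp \mathcal R^n$ has at most polynomial volume growth it is stochastically complete, so $\{e^{-t\Delta}\}_{t>0}$ is a conservative, positivity preserving, self-adjoint semigroup, and $|e^{-t\Delta}f|\le e^{-t\Delta}|f|\le\|f\|_\infty$. Hence the whole content is the weak type $(1,1)$ estimate; granting it, the $L^p$ bounds for $1<p<\infty$ follow by Marcinkiewicz interpolation with the $L^\infty$ bound (alternatively, from Stein's maximal theorem for symmetric diffusion semigroups \cite{Stein}). To prove the weak type $(1,1)$ bound I write $Tf\le T_{\mathrm{loc}}f+T_{\mathrm{glob}}f$, with $T_{\mathrm{loc}}f=\sup_{0<t\le1}|e^{-t\Delta}f|$ and $T_{\mathrm{glob}}f=\sup_{t>1}|e^{-t\Delta}f|$, the cut at $t=1$ being the natural one in view of Theorem~A, and bound each piece.

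For $T_{\mathrm{loc}}$ the first estimate of Theorem~A gives, for $t\le1$, $e^{-t\Delta}|f|(x)\le CV(x,\sqrt t)^{-1}\int e^{-c\,d(x,y)^2/t}|f(y)|\,d\mu(y)$, and I split this integral at $d(x,y)=1$. On $d(x,y)\le1$ the standard dyadic‑annulus estimate, using that \eqref{eq2s-functiona} makes $V$ doubling at all scales $\le2$, bounds the contribution by $C\mathcal M_{\le2}f(x)$, where $\mathcal M_{\le2}f=\sup_{0<r\le2}V(\cdot,r)^{-1}\int_{B(\cdot,r)}|f|\,d\mu$ is of weak type $(1,1)$ on $\mathbb R^m \sharp \mathcal R^n$ by a localized Vitali covering argument. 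On $d(x,y)>1$ one uses $e^{-c\,d(x,y)^2/t}\le e^{-c\,d(x,y)^2/2}\,e^{-c/2t}$ together with the elementary bound $\sup_{0<t\le1}e^{-c/2t}V(x,\sqrt t)^{-1}<\infty$, so this contribution is $\le C\int e^{-c\,d(x,y)^2/2}|f(y)|\,d\mu(y)$; since $V(y,\rho)\lesssim\rho^m$ for all $\rho$ by \eqref{eq2s-functiona}, this kernel is in $L^1(d\mu(x))$ uniformly in $y$, so the operator is bounded on $L^1$, hence of weak type $(1,1)$.

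The substance is $T_{\mathrm{glob}}$. I partition $\mathbb R^m \sharp \mathcal R^n\times\mathbb R^m \sharp \mathcal R^n$ according to which of $\mathbb R^m\backslash K$, $K$, $\mathcal R^n\backslash K$ contains $x$ and which contains $y$, and in each of the seven regimes I write the upper bound of Theorem~A(2)--(7) as a finite sum of elementary kernels, each of the shape
$$
\frac{\Gamma(x)\,\Gamma(y)}{t^{d/2}}\,\exp\!\Big(-c\,\frac{B(x,y)^2}{t}\Big),\qquad d\in\{n,m\},
$$
where $B(x,y)$ is $d(x,y)$ or $\big(|x|^2+|y|^2\big)^{1/2}$ and each factor $\Gamma$ is either $1$ or a transience factor $\asymp|x|^{-(m-2)}$ on the large end, $\asymp|x|^{-(n-2)}$ on the small end. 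For each such kernel I bound the supremum in $t>1$ of the associated operator. When $\Gamma(x)=\Gamma(y)=1$ (the ``diagonal'' kernels $t^{-d/2}e^{-c\,d(x,y)^2/t}$, which by Theorem~A occur only with $x,y$ ranging over a single Euclidean end together with $K$), the kernel is dominated by the Euclidean heat kernel of the model end $\mathbb R^m$, resp.\ $\mathbb R^n\times\mathbb S^{m-n}$, the supremum in $t$ is controlled by the Hardy--Littlewood maximal operator of that doubling model space, and since $x$ is then confined to one end — where $\mu$ is comparable to the model measure — the weak type $(1,1)$ bound on the model transfers to $\mathbb R^m \sharp \mathcal R^n$. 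For all the other kernels, $\sup_{t>1}t^{-d/2}e^{-cB^2/t}\asymp(1+B)^{-d}$, so one is reduced to showing that the integral operators with the explicit kernels $\Gamma(x)\Gamma(y)(1+B(x,y))^{-d}$ are of weak type $(1,1)$; after integrating out the compact and angular variables these become one‑dimensional radial integral operators of Hardy type, and the weak type $(1,1)$ bound is verified directly from the distribution function — the transience factors $\Gamma$, which exist precisely because the ends are Euclidean with $n,m\ge3$, supply exactly the extra decay needed to absorb the growth of $V$ across the non‑doubling scale $r\asymp|x|$, and the exponent bookkeeping closes because $m>n\ge3$.

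The main difficulty is exactly this last point. On $\mathbb R^m \sharp \mathcal R^n$ one can neither dominate $\sup_{t>1}\frak h_t(x,y)$ by the naive heat‑kernel bound $V(x,\sqrt t)^{-1}e^{-c\,d(x,y)^2/t}$ (the transience terms in Theorem~A are not of that form) nor pull the supremum outside the integral and estimate the resulting kernel: $\sup_{t>1}\frak h_t(\cdot,y)$ is not in $L^1$, and the associated positive integral operator is not even of weak type $(1,1)$. One must keep the full $t$‑dependence and exploit the sharp spatial decay in Theorem~A; checking, uniformly over the three regions and both scales, that this decay dominates the blow‑up of the non‑doubling volumes is where the real work lies.
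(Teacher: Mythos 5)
The paper gives no proof of Theorem~B at all — it is quoted verbatim from \cite{DLS}, whose argument the authors describe elsewhere as ``a direct consequence of the sharp upper bounds on heat kernels in \cite{GS}''. Your proof (splitting at $t=1$, controlling the local part and the diagonal Gaussian pieces by locally doubling / model-end Hardy--Littlewood maximal functions, and verifying the weak $(1,1)$ bound for the transient off-diagonal kernels directly from the distribution function, with $m>n\ge 3$ closing the exponents) is correct and is essentially that argument.
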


For the rest of the article, let $L$ be a non-negative self-adjoint operator whose heat kernels satisfy upper bounds of Gaussian type.
We first have the following corollary.

\begin{corollary}  Theorem B  holds for the maximal operator via the heat semigroup generated by $L$, i.e.,
$T_L(f)(x):=\sup\limits_{t>0}
|\exp(-tL)f(x)|$ is of weak type $(1,1)$ and bounded on $L^\infty(\mathbb R^m \sharp \mathcal R^n)$, and hence it is bounded on $L^p(\mathbb R^m \sharp \mathcal R^n)$ for all $1<p\leq\infty$.  
\end{corollary}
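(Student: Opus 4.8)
The plan is to dominate $T_L$ pointwise by the Laplace--Beltrami maximal operator $T$ of Theorem B, applied to the absolute value $|f|$, and then transfer the conclusions of Theorem B directly.

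First I would record the consequence of the hypothesis on $L$: there exist constants $C,\alpha>0$ so that $|h_t(x,y)|\le C\,\mathfrak{h}_{\alpha t}(x,y)$ for all $t>0$ and almost every $x,y\in\mathbb R^m\sharp\mathcal R^n$ (this is how the upper bound of Gaussian type is to be read; in the model case $L=\Delta+V$ with $V\ge0$ the kernel $h_t$ is non-negative, so the bound on $|h_t|$ is immediate from the definition). Using the non-negativity of $\mathfrak{h}_s$, for $f\in L^1+L^\infty$ we obtain, for every $t>0$,
$$ |\exp(-tL)f(x)|\le\int_{\mathbb R^m\sharp\mathcal R^n}|h_t(x,y)|\,|f(y)|\,d\mu(y)\le C\int_{\mathbb R^m\sharp\mathcal R^n}\mathfrak{h}_{\alpha t}(x,y)\,|f(y)|\,d\mu(y)=C\,\exp(-\alpha t\Delta)(|f|)(x). $$
Since $t\mapsto\alpha t$ is a bijection of $(0,\infty)$ onto itself, taking the supremum over $t>0$ gives the pointwise estimate
$$ T_Lf(x)=\sup_{t>0}|\exp(-tL)f(x)|\le C\sup_{s>0}\exp(-s\Delta)(|f|)(x)=C\,T(|f|)(x), $$
where the last equality uses that $\exp(-s\Delta)g\ge0$ whenever $g\ge0$.

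With this pointwise control in hand, the corollary follows from Theorem B applied to $|f|$. Since $\||f|\|_{L^p}=\|f\|_{L^p}$, the weak type $(1,1)$ bound for $T$ yields $\|T_Lf\|_{L^{1,\infty}}\le C\|T(|f|)\|_{L^{1,\infty}}\le C\|f\|_{L^1}$, the $L^\infty$ bound for $T$ yields $\|T_Lf\|_{L^\infty}\le C\|f\|_{L^\infty}$, and (either by transferring the $L^p$ bounds of $T$ for $1<p<\infty$, or by Marcinkiewicz interpolation applied to the sublinear operator $T_L$ between the weak $(1,1)$ and $L^\infty$ bounds just obtained) we get $\|T_Lf\|_{L^p}\le C\|f\|_{L^p}$ for all $1<p\le\infty$.

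There is no genuine obstacle here: the argument is a routine pointwise-domination reduction. The only two points to flag are that the Gaussian-type upper bound must be used in the form $|h_t(x,y)|\le C\,\mathfrak{h}_{\alpha t}(x,y)$ (so that $\exp(-tL)$ is controlled by $\exp(-\alpha t\Delta)$ acting on $|f|$ rather than on $f$ itself), and that the time-dilation $t\mapsto\alpha t$ must be absorbed without altering the supremum, which it does since it merely reparametrizes the one-parameter family $\{\exp(-s\Delta)\}_{s>0}$.
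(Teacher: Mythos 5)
Your proposal is correct and follows essentially the same route as the paper: the paper's one-line proof is exactly the pointwise domination $\exp(-tL)|f|(x)\le C\exp(-\alpha t\Delta)|f|(x)$ followed by an appeal to Theorem B. You merely spell out the (routine) details of reading the Gaussian-type bound as $|h_t(x,y)|\le C\,\mathfrak{h}_{\alpha t}(x,y)$, absorbing the dilation $t\mapsto\alpha t$, and transferring the weak $(1,1)$, $L^\infty$, and $L^p$ conclusions.
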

Proof: This follows directly from the inequality
$$\exp (-tL) | f |(x) \le C \exp (-\alpha t \Delta) | f | (x) $$
and Theorem B.

Next, we study the properties of the Poisson semigroup generated by $L$.  Let $k\in \mathbb{N}$, we denote by $P_{t,k}(x,y)$ the kernel of $ (t\sqrt{L})^k e^{-t\sqrt{L}}$.  For $k=0$, we write $P_t(x,y)$ instead of $P_{t,0}(x,y)$.

\begin{theorem}\label{theorem poisson}
For $k\in \mathbb{N}$, set $k\vee 1=\max\{k,1\}$. Then the exists a constant $C$ (which depends on $k$) such that the kernel  $P_{t,k}(x,y)$ satisfies the following estimates:


1. For $x,y\in K $,
$$ |P_{t,k}(x,y)|\leq \frac{C}{t^m}\Big(\frac{t}{t+d(x,y)}\Big)^{m+k\vee 1}+ \frac{C}{t^n}\Big(\frac{t}{t+d(x,y)}\Big)^{n+k\vee 1}; $$

2. For $x\in \mathbb{R}^m\backslash K $, $y\in K$,
$$ |P_{t,k}(x,y)|\leq \frac{C}{t^m}\Big(\frac{t}{t+d(x,y)}\Big)^{m+k\vee 1} +\frac{C}{t^n|x|^{m-2}}\Big(\frac{t}{t+d(x,y)}\Big)^{n+k\vee 1}; $$

3. For  $x\in \mathcal{R}^n\backslash K $, $y\in K$,
$$ |P_{t,k}(x,y)|\leq \frac{C}{t^m}\Big(\frac{t}{t+d(x,y)}\Big)^{m+k\vee 1}+ \frac{C}{t^n}\Big(\frac{t}{t+d(x,y)}\Big)^{n+k\vee 1}; $$

4. For  $x\in \mathbb{R}^m\backslash K $, $y\in \mathcal{R}^n\backslash K $,
\begin{align*} 
|P_{t,k}(x,y)|&\leq \frac{C}{t^m}\Big(\frac{t}{t+d(x,y)}\Big)^{m+k\vee 1}+ \frac{C}{t^n|x|^{m-2}}\Big(\frac{t}{t+d(x,y)}\Big)^{n+k\vee1}\\
&\quad+\frac{C}{t^m|y|^{n-2}}\Big(\frac{t}{t+d(x,y)}\Big)^{m+k\vee 1}; 
\end{align*}

5. For  $x,y\in \mathbb{R}^m\backslash K $,
$$ |P_{t,k}(x,y)|\leq \frac{C}{t^m}\Big(\frac{t}{t+d(x,y)}\Big)^{m+k\vee 1} +\frac{C}{t^n|x|^{m-2}|y|^{m-2}}\Big(\frac{t}{t+|x|+|y|}\Big)^{n+k\vee1};$$ 

6. For $x,y\in \mathcal{R}^n\backslash K $,
$$ |P_{t,k}(x,y)|\leq \frac{C}{t^m}\Big(\frac{t}{t+d(x,y)}\Big)^{m+k\vee 1}+ \frac{C}{t^n}\Big(\frac{t}{t+d(x,y)}\Big)^{n+k\vee 1}. $$
\end{theorem}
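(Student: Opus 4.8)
The plan is to pass from the heat kernel to the Poisson kernel through the subordination formula and then to feed in Theorem~A. For a non‑negative self‑adjoint $L$, the spectral theorem together with the classical identity $e^{-a}=\pi^{-1/2}\int_0^\infty v^{-1/2}e^{-v}e^{-a^2/(4v)}\dd v$ gives
$$e^{-t\sqrt{L}}=\frac{t}{2\sqrt{\pi}}\int_0^\infty s^{-3/2}e^{-t^2/(4s)}\,e^{-sL}\dd s .$$
Since $(-\p_t)^k e^{-t\sqrt L}=(\sqrt L)^k e^{-t\sqrt L}$ in the strong sense and the scalar weight above decays rapidly in $t$, differentiating under the integral is legitimate and yields
$$(t\sqrt{L})^k e^{-t\sqrt{L}}=\frac{1}{2\sqrt{\pi}}\int_0^\infty \psi_{t,k}(s)\,e^{-sL}\dd s,\qquad \psi_{t,k}(s):=t^k(-\p_t)^k\!\big(ts^{-3/2}e^{-t^2/(4s)}\big).$$
Setting $u=t/\sqrt s$ one has $ts^{-3/2}e^{-t^2/(4s)}=s^{-1}ue^{-u^2/4}$ and $\p_t=s^{-1/2}\p_u$, so $\psi_{t,k}(s)=t^ks^{-1-k/2}R_k(t/\sqrt s)e^{-t^2/(4s)}$ with $R_k$ a polynomial of degree $k+1$. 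A short check of $R_k$ at the origin (the product $u^k|R_k(u)|$ vanishes to order at least $k\vee1$ at $u=0$) combined with the Gaussian damping for $u$ large gives the clean bound
$$|\psi_{t,k}(s)|\le C_k\,t^{k\vee1}\,s^{-1-(k\vee1)/2}\,e^{-ct^2/s};$$
note the exponent $0\vee1=1$ for $k=0$ comes for free from the single power of $t/\sqrt s$ already carried by the subordinator density.

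By the Gaussian-type upper bound, $P_{t,k}(x,y)=\frac{1}{2\sqrt\pi}\int_0^\infty\psi_{t,k}(s)\,h_s(x,y)\dd s$ obeys $|P_{t,k}(x,y)|\le C\int_0^\infty|\psi_{t,k}(s)|\,\frak{h}_{\alpha s}(x,y)\dd s$, so everything reduces to integrating the Theorem~A bounds against $|\psi_{t,k}|$. The engine is the elementary identity $\int_0^\infty s^{-1-a}e^{-\lambda/s}\dd s=\Gamma(a)\lambda^{-a}$ for $a,\lambda>0$: if on some $s$-range Theorem~A gives $\frak{h}_{\alpha s}(x,y)\le C\,Q(x,y)\,s^{-b/2}e^{-cD^2/s}$ with $b\in\{n,m\}$, with $Q$ an $s$-independent factor (one of $1$, $|x|^{-(m-2)}$, $|y|^{-(n-2)}$, $|x|^{-(m-2)}|y|^{-(m-2)}$, and so on), and with $D$ equal either to $d(x,y)$ or to $|x|+|y|$, then combining the two exponentials and using $t^2+D^2\approx(t+D)^2$,
$$t^{k\vee1}\!\int_0^\infty \!s^{-1-(k\vee1)/2-b/2}e^{-c(t^2+D^2)/s}\dd s\;\approx\;t^{k\vee1}(t+D)^{-b-(k\vee1)}=\frac{1}{t^{b}}\Big(\frac{t}{t+D}\Big)^{b+k\vee1}.$$
Thus each summand of each case of Theorem~A produces, after integration, precisely one term of the form appearing on the right‑hand side of Theorem~\ref{theorem poisson}, with $Q(x,y)$ passing through unchanged.

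To assemble the result, split $\int_0^\infty\dd s=\int_0^{1/\alpha}\dd s+\int_{1/\alpha}^{\infty}\dd s$. On $(0,1/\alpha)$, part~1 of Theorem~A together with $V(x,\sqrt{\alpha s})\approx s^{m/2}$ (valid for every $x$ once $\sqrt{\alpha s}\le1$) gives $\frak{h}_{\alpha s}(x,y)\lesssim s^{-m/2}e^{-cd(x,y)^2/s}$, which by the preceding paragraph produces the term $t^{-m}\big(t/(t+d(x,y))\big)^{m+k\vee1}$ common to all six cases. On $(1/\alpha,\infty)$ one invokes the part of Theorem~A matching the location of $(x,y)$ — part~2 when $x,y\in K$; part~3 when $x\in\mathbb{R}^m\backslash K,\ y\in K$; part~4 when $x\in\mathcal{R}^n\backslash K,\ y\in K$; part~5 when $x\in\mathbb{R}^m\backslash K,\ y\in\mathcal{R}^n\backslash K$; part~6 when $x,y\in\mathbb{R}^m\backslash K$; part~7 when $x,y\in\mathcal{R}^n\backslash K$ — and applies the master computation to each summand. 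In part~4 one first absorbs $|x|^{-(n-2)}\lesssim1$, and in part~7 one first notes $|x|,|y|\gtrsim1$ and $d(x,y)\lesssim|x|+|y|$, so that the first summand there is dominated by the second; after these reductions each remaining summand yields exactly the corresponding term of Theorem~\ref{theorem poisson}. Replacing $\int_{1/\alpha}^\infty$ by $\int_0^\infty$ in these steps is harmless because the integrands are non‑negative and the factor $e^{-c(t^2+D^2)/s}$ makes them integrable at $s=0$.

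The only genuinely delicate point is establishing the pointwise bound on $\psi_{t,k}$ with the correct exponent $k\vee1$ uniformly in $t$, i.e.\ the order‑of‑vanishing analysis of the polynomials $R_k$ at the origin. Everything else is bookkeeping: identifying which of the seven regimes of Theorem~A governs $\frak{h}_{\alpha s}$ for $s$ large, and carrying the exponential weight in the variable ($d(x,y)$ or $|x|+|y|$) dictated by Theorem~A. Because $t/(t+D)\le1$, any exponent $\ge k\vee1$ already implies the stated bounds, so the polynomial factors in $|\psi_{t,k}|$ may safely be absorbed into the Gaussian.
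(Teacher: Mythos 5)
Your proposal is correct and follows essentially the same route as the paper: the subordination formula, a pointwise bound $|\psi_{t,k}(s)|\le C_k\,t^{k\vee1}s^{-1-(k\vee1)/2}e^{-ct^2/s}$ on the differentiated subordinator weight (the paper derives the equivalent estimate via Fa\`a di Bruno's formula), followed by the small-$s$/large-$s$ split matched to the seven regimes of Theorem~A and the elementary Gamma-integral computation. The only cosmetic difference is that you carry the exponent $k\vee1$ uniformly through all $k$, whereas the paper writes out $k\ge1$ and remarks that $k=0$ is similar.
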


\begin{proof}
By the subordination formula we have
\begin{equation}\label{subor-formula}
e^{-t\sqrt{L}}=\frac{1}{2\sqrt{\pi}}\int_0^\infty\frac{te^{-\frac{t^2}{4v}}}{\sqrt{v}}e^{-vL}\frac{dv}{v},
\end{equation}
and hence
\begin{equation*}
\begin{aligned}
(t\sqrt{L})^{k}e^{-t\sqrt{L}}&=(-1)^k\frac{t^k}{2\sqrt{\pi}}\int_0^\infty\partial^k_t(te^{-\frac{t^2}{4v}})e^{-vL}\frac{du}{v^{3/2}}\\
&=(-1)^k\frac{t^k}{\sqrt{\pi}}\int_0^\infty\partial^{k+1}_t(e^{-\frac{t^2}{4v}})e^{-vL}\frac{dv}{\sqrt{v}}.
\end{aligned}
\end{equation*}
This yields that
\begin{equation}\label{eq1-thm2.1}
\begin{aligned}
P_{t,k}(x,y)&=(-1)^k\frac{t^k}{\sqrt{\pi}}\int_0^\infty\partial^{k+1}_t(e^{-\frac{t^2}{4v}})H_v(x,y)\frac{dv}{\sqrt{v}}
\end{aligned}
\end{equation}
where $H_{v}(x,y)$ is the kernel of $e^{-vL}$.

Let $s>0$ and $k\in \mathbb{N}$. By Fa\`a di Bruno's formula, we can write
$$
\partial^{k+1}_te^{-\frac{t^2}{s}}=\sum\frac{(-1)^{m_1+m_2}}{2 \times m_1!m_2!}e^{-t^2/s}\Big(\frac{t}{s}\Big)^{m_1}\Big(\frac{1}{s}\Big)^{m_2},
$$
where the sum is taken over all pairs $(m_1,m_2)$ of nonnegative integers satisfying $m_1+2m_2=k+1$. For such a pair $(m_1,m_2)$, there exists $C>0$ so that
\begin{align}
e^{-\frac{t^2}{s}}\Big(\frac{t}{s}\Big)^{m_1}\Big(\frac{1}{s}\Big)^{m_2}&=e^{-\frac{t^2}{s}}\Big(\frac{t}{\sqrt{s}}\Big)^{m_1}\Big(\frac{1}{s}\Big)^{m_1/2+m_2}\\
&\leq C e^{-\frac{2t^2}{ 3s}}s^{-(k+1)/2}\max\Big\{1, \Big(\frac{t}{\sqrt{s}}\Big)^{k+1}\Big\}.\nonumber
\end{align}
This implies that
\begin{equation}\label{eq2-thm2.1}
|\partial^{k+1}_te^{-\frac{t^2}{s}}|\leq C e^{-\frac{t^2}{2s}}s^{-(k+1)/2}.
\end{equation}
From \eqref{eq1-thm2.1} and \eqref{eq2-thm2.1} we deduce that
\begin{equation}\label{eq3-thm2.1}
\begin{aligned}
|P_{t,k}(x,y)|&\le  \frac{C}{4\sqrt{\pi}}\int_0^\infty e^{-\frac{t^2}{8v}}\Big(\frac{t}{\sqrt{v}}\Big)^{k} |H_v(x,y)| \frac{dv}{v}.
\end{aligned}
\end{equation}
We now give the estimates for $P_{t,k}(x,y)$ with $k\geq 1$ only, since the remaining case $k=0$ can be done similarly.

We have
\begin{align*}
|P_{t,k}(x,y)|&\le  \frac{C}{4\sqrt{\pi}}\int_0^1 e^{-\frac{t^2}{8v}}\Big(\frac{t}{\sqrt{v}}\Big)^{k} |H_v(x,y)| \frac{dv}{v}+\frac{C}{4\sqrt{\pi}}\int_1^\infty e^{-\frac{t^2}{8v}}\Big(\frac{t}{\sqrt{v}}\Big)^{k} |H_v(x,y)| \frac{dv}{v}\\
&=:\mathbb J_1(x,y)+\mathbb J_2(x,y).
\end{align*}
Applying the upper bound in point 1 in Theorem A for $ |H_v(x,y)| $, we have
\begin{align*} 
\mathbb J_1(x,y)&\leq C\int_0^1   e^{-\frac{t^2}{8v}}\Big(\frac{t}{\sqrt{v}}\Big)^{k}   {1\over v^{m/2}} \exp\Big(- {d(x,y)^2\over cv} \Big) {dv\over v}\\
&\le C\int_0^\infty   \Big(\frac{t}{\sqrt{v}}\Big)^{k}   {1\over v^{m/2}} \exp\Big(-{d(x,y)^2 +t^2\over cv} \Big) {dv\over v}\\
&\le C\left(\int_0^{d(x,y)^2 +t^2} +\int^\infty_{d(x,y)^2 +t^2} \right)  \Big(\frac{t}{\sqrt{v}}\Big)^{k}   {1\over v^{m/2}} \exp\Big(-{d(x,y)^2 +t^2\over cv} \Big) {dv\over v}\\
&\leq \frac{C}{t^m}\Big(\frac{t}{t+d(x,y)}\Big)^{m+k}.
\end{align*}

For the term $\mathbb J_2(x,y)$, we consider the following 6 cases:

Case 1: $x,y\in K $.

Applying the upper bound in point 2 in Theorem A for $ |H_v(x,y)| $,  we have
$$ |p_v(x,y)|\leq {C\over v^{n/2}} \exp\Big(-c_0 {d(x,y)^2\over v} \Big). $$

Arguing similarly to the estimate of $\mathbb J_1(x,y)$ we obtain
\begin{align*} 
\mathbb J_2(x,y)&\leq \frac{C}{t^n}\Big(\frac{t}{t+d(x,y)}\Big)^{n+k}.
\end{align*}

Case 2:  $x\in \mathbb{R}^m\backslash K $, $y\in K$.

Applying the upper bound in point 3 in Theorem A for $ |H_v(x,y)| $,  we get that
$$ |H_v(x,y)|\leq C\left({1 \over v^{n/2} |x|^{m-2}}+ {1\over v^{m/2}}\right) \exp\Big(-c_0 {d(x,y)^2\over v} \Big).$$
Hence, we get
\begin{align*} 
\mathbb J_2(x,y)&\leq C\int_1^\infty   e^{-\frac{t^2}{8v}}\Big(\frac{t}{\sqrt{v}}\Big)^{k}   \left({1 \over v^{n/2} |x|^{m-2}}+ {1\over v^{m/2}}\right) \exp\Big(-c_0 {d(x,y)^2\over v} \Big) {dv\over v}\\
&\leq \frac{C}{t^n|x|^{m-2}}\Big(\frac{t}{t+d(x,y)}\Big)^{n+k}+\frac{C}{t^m}\Big(\frac{t}{t+d(x,y)}\Big)^{m+k}.
\end{align*}

Case 3:   $x\in \mathcal{R}^n\backslash K $, $y\in K$.

Applying the upper bound in point 4 in Theorem A for $ |H_v(x,y)| $,  we have
\begin{align*} 
\mathbb J_2(x,y)&\leq C\int_1^\infty   e^{-\frac{t^2}{8v}}\Big(\frac{t}{\sqrt{v}}\Big)^{k}  \left({1 \over v^{n/2} |x|^{n-2}}+ {1\over v^{n/2}}\right) \exp\Big(-c_0 {d(x,y)^2\over v} \Big){dv\over v}\\
&\leq \frac{C}{t^n|x|^{n-2}}\Big(\frac{t}{t+d(x,y)}\Big)^{n+k}+\frac{C}{t^n}\Big(\frac{t}{t+d(x,y)}\Big)^{n+k}\\
&\leq \frac{C}{t^n}\Big(\frac{t}{t+d(x,y)}\Big)^{n+k}.
\end{align*}

Case 4: $x\in \mathbb{R}^m\backslash K $, $y\in \mathcal{R}^n\backslash K $.

Applying the upper bound in point 5 in Theorem A for $ |H_v(x,y)| $,  we get that
\begin{align*} 
\mathbb J_2(x,y)&\leq C\int_1^\infty   e^{-\frac{t^2}{8v}}\Big(\frac{t}{\sqrt{v}}\Big)^{k}  \left({1 \over v^{n/2} |x|^{m-2}}+ {1\over v^{m/2}|y|^{n-2}}\right) \exp\Big(-c_0 {d(x,y)^2\over v} \Big)  {dv\over v}\\
&\leq \frac{C}{t^n|x|^{m-2}}\Big(\frac{t}{t+d(x,y)}\Big)^{n+k}+\frac{C}{t^m|y|^{n-2}}\Big(\frac{t}{t+d(x,y)}\Big)^{m+k}.
\end{align*}

Case 5: $x,y\in \mathbb{R}^m\backslash K $.

Applying the upper bound in point 6 in Theorem A for $ |H_v(x,y)| $,  we find that
\begin{align*} 
\mathbb J_2(x,y)&\leq C\int_1^\infty   e^{-\frac{t^2}{8v}}\Big(\frac{t}{\sqrt{v}}\Big)^{k}  {1 \over v^{n/2} |x|^{m-2}|y|^{m-2}} \exp\Big(-c_0 {|x|^2+|y|^2\over v} \Big){dv\over v}\\
& \ \ \ \ \  + C\int_1^\infty   e^{-\frac{t^2}{8v}}\Big(\frac{t}{\sqrt{v}}\Big)^{k}  {1\over v^{m/2}} \exp\Big(-c_0 {d(x,y)^2\over v} \Big)  {dv\over v}\\
&\leq \frac{C}{t^n|x|^{m-2}|y|^{m-2}}\Big(\frac{t}{t+|x|+|y|}\Big)^{n+k} +\frac{C}{t^m}\Big(\frac{t}{t+d(x,y)}\Big)^{m+k}.
\end{align*}

Case 6:   $x,y\in \mathcal{R}^n\backslash K $.

Applying the upper bound in point 7 in Theorem A for $ |H_v(x,y)| $, we obtain that
\begin{align*} 
\mathbb J_2(x,y)&\leq C\int_1^\infty   e^{-\frac{t^2}{8v}}\Big(\frac{t}{\sqrt{v}}\Big)^{k}  {1 \over v^{n/2} |x|^{n-2}|y|^{n-2}} \exp\Big(-c_0 {|x|^2+|y|^2\over v} \Big){dv\over v}\\
& \ \ \ \ \ \  + \int_1^\infty   e^{-\frac{t^2}{8v}}\Big(\frac{t}{\sqrt{v}}\Big)^{k}{1\over v^{n/2}} \exp\Big(-c_0 {d(x,y)^2\over v} \Big)  {dv\over v}\\
&\leq \frac{C}{t^n|x|^{n-2}|y|^{n-2}}\Big(\frac{t}{t+d(x,y)}\Big)^{n+k}+\frac{C}{t^n}\Big(\frac{t}{t+d(x,y)}\Big)^{n+k}\\
&\leq \frac{C}{t^n}\Big(\frac{t}{t+d(x,y)}\Big)^{n+k}.
\end{align*}

\end{proof}

We observe that the proof of Theorem \ref{theorem poisson}  can be extended to obtain the estimates
for the complex Poisson semigroup and its time derivatives $(z\sqrt{L})^k\exp(-z\sqrt{L})$.
Indeed, we have the following result.

\begin{theorem}\label{complexPoisson}
Fix $0 < \mu < {\pi \over 4}$, let $S_{\mu}^{0} = \{ z \in \mathbb C: |\arg z | < \mu \}$ and choose $z \in S_{\mu}^{0}$. 
The complex Poisson semigroup and its time derivatives $(z\sqrt{L})^k\exp(-z\sqrt{L})$ exist and satisfy the upper bounds
as in Theorem \ref{theorem poisson} with $t$ to be replaced by $|z|$.
\end{theorem}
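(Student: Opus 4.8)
The plan is to run the proof of Theorem~\ref{theorem poisson} essentially verbatim, the single new ingredient being the elementary observation that for $z\in S_\mu^0$ with $0<\mu<\pi/4$ one has
$$\Re(z^2)=|z|^2\cos(2\arg z)\ge |z|^2\cos(2\mu)=:c_\mu|z|^2>0,$$
so that the Gaussian factor $e^{-z^2/(4v)}$ appearing in the subordination formula does not degenerate and may be dominated by $e^{-c_\mu|z|^2/(4v)}$. This is precisely the point at which the restriction $\mu<\pi/4$ is used. First I would record that, $\sqrt L$ being non-negative and self-adjoint, $e^{-z\sqrt L}$ is defined by the spectral theorem for all $z$ with $\Re z\ge 0$, and that the subordination identity~\eqref{subor-formula} persists in the form
$$e^{-z\sqrt L}=\frac{1}{2\sqrt\pi}\int_0^\infty\frac{z\,e^{-z^2/(4v)}}{\sqrt v}\,e^{-vL}\,\frac{dv}{v}\qquad (z\in S_{\pi/4}^0).$$
This follows from the scalar identity $e^{-z\sqrt\lambda}=\frac{1}{2\sqrt\pi}\int_0^\infty\frac{z\,e^{-z^2/(4v)}}{\sqrt v}e^{-v\lambda}\frac{dv}{v}$, $\lambda\ge 0$, which is classical for $z>0$ and extends by analytic continuation to $\{\Re(z^2)>0\}$ (both sides are holomorphic there, the integral on the right converging locally uniformly because $|e^{-z^2/(4v)}|=e^{-\Re(z^2)/(4v)}$), combined with the spectral theorem.

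Next, using the identity $z\,e^{-z^2/(4v)}=-2v\,\partial_z\big(e^{-z^2/(4v)}\big)$ and differentiating $k$ times in $z$ under the integral sign, I would obtain the analogue of~\eqref{eq1-thm2.1},
$$P_{z,k}(x,y)=(-1)^k\frac{z^k}{\sqrt\pi}\int_0^\infty\partial_z^{k+1}\big(e^{-z^2/(4v)}\big)\,H_v(x,y)\,\frac{dv}{\sqrt v},$$
which in particular shows that $(z\sqrt L)^k e^{-z\sqrt L}$ has a kernel depending holomorphically on $z\in S_\mu^0$; differentiation under the integral is justified by the locally-uniform-in-$z$ domination of the integrand and its $z$-derivatives of order $\le k$ furnished by the bound in the next step together with the Gaussian-type estimate for $H_v$ from Theorem~A.

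It then remains to establish the complex counterpart of~\eqref{eq2-thm2.1}. Fa\`a di Bruno's formula gives, exactly as in the real case,
$$\partial_z^{k+1}e^{-z^2/s}=\sum\frac{(-1)^{m_1+m_2}}{2\,m_1!\,m_2!}\,e^{-z^2/s}\Big(\frac{z}{s}\Big)^{m_1}\Big(\frac1s\Big)^{m_2},\qquad m_1+2m_2=k+1,$$
and taking absolute values, with $|e^{-z^2/s}|=e^{-\Re(z^2)/s}\le e^{-c_\mu|z|^2/s}$ and $|z/s|^{m_1}=(|z|/s)^{m_1}$, yields
$$\big|\partial_z^{k+1}e^{-z^2/s}\big|\le C\,e^{-c_\mu|z|^2/s}\,s^{-(k+1)/2}\max\Big\{1,\Big(\tfrac{|z|}{\sqrt s}\Big)^{k+1}\Big\}\le C\,e^{-c_\mu|z|^2/(2s)}\,s^{-(k+1)/2},$$
the last inequality because $x^{k+1}e^{-c_\mu x^2/2}$ is bounded for $x\ge 0$. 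Inserting this with $s=4v$ into the kernel formula above gives
$$|P_{z,k}(x,y)|\le C\int_0^\infty e^{-c_\mu|z|^2/(8v)}\Big(\frac{|z|}{\sqrt v}\Big)^{k}|H_v(x,y)|\,\frac{dv}{v},$$
which is exactly~\eqref{eq3-thm2.1} with $t$ replaced by $|z|$ and the harmless constant $c_\mu$ absorbed into the exponents. From here the six-case analysis in the proof of Theorem~\ref{theorem poisson} (whose Gaussian constants are already generic) applies word for word, producing the bounds of Theorem~\ref{theorem poisson} with $t$ replaced by $|z|$ throughout.

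The only part requiring genuine care, as opposed to routine transcription, is the first paragraph: verifying that the subordination representation and the differentiation under the integral sign remain valid for complex $z$, so that $P_{z,k}(x,y)$ really is the kernel of $(z\sqrt L)^k e^{-z\sqrt L}$ and is holomorphic in $z$. Everything else is the real-variable argument with $t\mapsto|z|$, the non-degeneracy of the Gaussian being guaranteed exactly by $\mu<\pi/4$.
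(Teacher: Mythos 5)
Your proposal is correct and follows the same route as the paper: define $e^{-z\sqrt L}$ by the subordination formula, use $\Re z^2>0$ on $S_\mu^0$ for convergence and analyticity, differentiate under the integral, and then rerun the real-variable estimates with $t$ replaced by $|z|$. The only difference is that you spell out the quantitative input $\Re(z^2)\ge \cos(2\mu)\,|z|^2$ and the justification of the complex subordination identity, which the paper leaves implicit in the phrase ``the rest of the proof is similar.''
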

\begin{proof}
Fix $0 < \mu < {\pi \over 4}$. For
$z \in S_{\mu}^{0}$, define 
\begin{equation}\label{complexextension}
e^{-z\sqrt{L}}=\frac{1}{2\sqrt{\pi}}\int_0^\infty\frac{ze^{-\frac{z^2}{4v}}}{\sqrt{v}}e^{-vL}\frac{dv}{v}.
\end{equation}
For $0 < |\arg z| < \mu < {\pi \over 4}$ we have $\Re z^2 > 0$,  hence the integral in \eqref{complexextension} converges. When $z$ is real, formula \eqref{complexextension} coincides with the Poisson semigroup \eqref{subor-formula}, hence formula \eqref{complexextension} defines the complex Poisson semigroup (which is unique by analyticity).  Hence
\begin{equation*}
\begin{aligned}
(z\sqrt{L})^{k}e^{-z\sqrt{L}}&=(-1)^k\frac{z^k}{2\sqrt{\pi}}\int_0^\infty\partial^k_z(ze^{-\frac{z^2}{4v}})e^{-vL}\frac{du}{v^{3/2}}\\
&=(-1)^k\frac{z^k}{\sqrt{\pi}}\int_0^\infty\partial^{k+1}_z(e^{-\frac{z^2}{4v}})e^{-vL}\frac{dv}{\sqrt{v}}.
\end{aligned}
\end{equation*}
This yields that
\begin{equation}\label{eq1-thm2.3}
\begin{aligned}
P_{z,k}(x,y)&=(-1)^k\frac{z^k}{\sqrt{\pi}}\int_0^\infty\partial^{k+1}_z(e^{-\frac{z^2}{4v}})H_v(x,y)\frac{dv}{\sqrt{v}}
\end{aligned}
\end{equation}
where $H_{v}(x,y)$ is the kernel of $e^{-vL}$. The rest of the proof is similar to Theorem \ref{theorem poisson}.
\end{proof}

We now obtain a weak type $(1,1)$ estimate for a maximal operator.
\begin{prop}\label{theorem2}
Fix $0 < \mu < {\pi \over 4}$. Let $T_k$ be the operator defined by 
$$T_k(f)(x) := \sup\limits_{z\in S_{\mu}^0} |(z\sqrt{L})^k\exp(-z\sqrt{L})f(x)|$$
 for an integer $k\geq0$ and $f \in L^p(\mathbb R^m \sharp \mathcal R^n)$. Then $T_k$ is of weak type $(1,1)$ and bounded on $L^p(\mathbb R^m \sharp \mathcal R^n)$ for  $1<p\leq\infty$. 
 \end{prop}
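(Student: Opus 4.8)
The plan is to dominate $T_k$ pointwise by the maximal heat-semigroup operator $T_L(f)(x)=\sup_{t>0}|e^{-tL}f(x)|$, which is already known --- by the corollary to Theorem~B above --- to be of weak type $(1,1)$ and bounded on $L^p$ for $1<p\le\infty$; the proposition then follows at once from such a pointwise bound. The reason the passage from real time to the sector $S_\mu^0$ costs nothing is the hypothesis $\mu<\pi/4$: for $z\in S_\mu^0$ one has $\Re(z^2)=|z|^2\cos(2\arg z)\ge c_\mu|z|^2$ with $c_\mu:=\cos 2\mu>0$, so the complex subordination kernel enjoys the same exponential decay in $v$ as its real counterpart.

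First I would fix $f$ in the dense subspace $L^1\cap L^2$ and rewrite the kernel identity \eqref{eq1-thm2.3} from Theorem~\ref{complexPoisson} in operator form, using Fubini (legitimate since the $v$-integral converges absolutely and $|H_v(x,y)|\le C\,\frak{h}_{\alpha v}(x,y)$ is integrable against $|f|$):
$$(z\sqrt L)^k e^{-z\sqrt L}f(x)=(-1)^k\frac{z^k}{\sqrt\pi}\int_0^\infty\partial_z^{k+1}\big(e^{-z^2/(4v)}\big)\,\big(e^{-vL}f\big)(x)\,\frac{dv}{\sqrt v}.$$
The Fa\`a di Bruno expansion employed in the proof of Theorem~\ref{theorem poisson}, together with $|e^{-z^2/(4v)}|=e^{-\Re(z^2)/(4v)}\le e^{-c_\mu|z|^2/(4v)}$, yields --- exactly as in \eqref{eq2-thm2.1} --- the estimate $|\partial_z^{k+1}e^{-z^2/(4v)}|\le C\,e^{-c_\mu|z|^2/(8v)}v^{-(k+1)/2}$. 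Inserting this and bounding $|(e^{-vL}f)(x)|\le T_Lf(x)$ for every $v>0$ gives
$$\big|(z\sqrt L)^k e^{-z\sqrt L}f(x)\big|\le C\,|z|^k\,T_Lf(x)\int_0^\infty e^{-c_\mu|z|^2/(8v)}v^{-(k+1)/2}\,\frac{dv}{\sqrt v}.$$
For $k\ge1$ the substitution $w=|z|^2/v$ turns this integral into $|z|^{-k}\int_0^\infty e^{-c_\mu w/8}w^{k/2-1}\,dw$, which is a finite constant (depending on $k$ and $\mu$) times $|z|^{-k}$; the powers $|z|^{\pm k}$ cancel, the bound is uniform in $z\in S_\mu^0$, and therefore $T_kf(x)\le C_k\,T_Lf(x)$.

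The case $k=0$ requires a small adjustment, because the integral $\int_0^\infty e^{-c_\mu w/8}w^{-1}\,dw$ diverges at $w=0$. Here I would instead work directly from the undifferentiated formula \eqref{complexextension}, estimating
$$|e^{-z\sqrt L}f(x)|\le\frac{1}{2\sqrt\pi}\int_0^\infty\frac{|z|\,e^{-\Re(z^2)/(4v)}}{v^{3/2}}\,\big|(e^{-vL}f)(x)\big|\,dv\le C\,T_Lf(x)\int_0^\infty\frac{|z|\,e^{-c_\mu|z|^2/(4v)}}{v^{3/2}}\,dv,$$
and the last integral, via the same substitution, equals $\int_0^\infty e^{-c_\mu w/4}w^{-1/2}\,dw<\infty$, again independent of $z$. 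Hence $T_0f(x)\le C\,T_Lf(x)$ as well.

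Finally, since $z\mapsto(z\sqrt L)^k e^{-z\sqrt L}f(x)$ is continuous on $S_\mu^0$ for a.e.\ $x$ (dominated convergence in the integral above, using the estimates just obtained, which are locally uniform in $z$), the supremum defining $T_kf$ may be taken over a countable dense subset of $S_\mu^0$, so $T_kf$ is measurable and the pointwise inequality $T_kf\le C_k\,T_Lf$ holds for all $f\in L^1\cap L^2$. A routine density-and-Fatou argument extends it to all of $L^1$ and $L^p$, and invoking the weak type $(1,1)$ and $L^p$ mapping properties of $T_L$ completes the proof. The only genuinely delicate point is the borderline integrability at $k=0$, which forces one back to \eqref{complexextension}; everything else is an immediate consequence of $\Re(z^2)>0$ on the sector and the already-established behaviour of $T_L$.
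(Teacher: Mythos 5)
Your proposal is correct, and it takes a genuinely different (and in fact shorter) route than the paper. The paper's proof establishes the kernel bounds for $P_{z,k}(x,y)$ in Theorem \ref{complexPoisson} and then reruns the argument of \cite{DLS} (the proof of Theorem B) for a maximal operator whose kernel satisfies those bounds; that is, it works at the level of kernel estimates on the non-doubling manifold. You instead prove the pointwise domination $T_k f(x)\le C_k\,T_L f(x)$ directly from the subordination formula, pulling $\sup_{v>0}|e^{-vL}f(x)|$ out of the $v$-integral, and then quote the Corollary to Theorem B for the heat maximal operator $T_L$. The computation is sound: on $S_\mu^0$ with $\mu<\pi/4$ one indeed has $\Re(z^2)\ge \cos(2\mu)\,|z|^2$, the Fa\`a di Bruno bound gives $|\partial_z^{k+1}e^{-z^2/(4v)}|\lesssim e^{-c_\mu|z|^2/(8v)}v^{-(k+1)/2}$ exactly as in \eqref{eq2-thm2.1}, the substitution $w=|z|^2/v$ produces $\Gamma(k/2)$ up to constants so the powers of $|z|$ cancel for $k\ge1$, and your separate treatment of $k=0$ via the undifferentiated formula \eqref{complexextension} correctly handles the divergence of $\int_0^\infty e^{-cw}w^{-1}\,dw$ at $w=0$. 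What your approach buys is that it bypasses Theorem \ref{complexPoisson} and the need to revisit \cite{DLS} entirely: the only inputs are the subordination formula and the already-established boundedness of $T_L$, so the proof is self-contained within the paper. What it gives up is the explicit kernel information for $P_{z,k}$, which the paper needs anyway for the main theorem but which is not required for this proposition. Your closing remarks on measurability of the supremum and on extending from $L^1\cap L^2$ to $L^1$ are the only slightly informal steps; both are routine here since the operator is given by an absolutely convergent kernel integral for $f\in L^1$.
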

\begin{proof}
We point out that with the upper bound of the kernel $P_{z,k}(x,y)$ of $(z\sqrt{L})^k\exp(-z\sqrt{L})$, the 
 weak type $(1,1)$ estimate of the maximal operator $T_k$ follows from the same idea and approach in 
 proof of Theorem B. For more details, we refer to \cite{DLS}.
\end{proof}

The concept of approximations to the identity plays an important role in harmonic analysis. For a family of approximations
to the identity $\phi_t \ast f$ in a doubling space like $\mathbb R^n$, the upper bound on $\phi (x)$ can be taken as the Gaussian bound
with exponential decay or Poisson bound with polynomial decay. However, in a non-doubling space like a 
manifold with ends $\mathbb R^m \sharp \mathcal R^n$, it is not obvious which type of bound is deemed natural. Here we 
suggest to use the Poisson kernels in the definition of an approximation to the identity in this setting. We note that 
in the case of $\mathbb R^n$, the term
${\displaystyle {1\over t^n}}$ in the Poisson kernel ${\displaystyle {1\over t^n} \times {c_n \over (1+ |x-y|^2)^{{n+1\over 2}}}}$ is independent of 
$x$ and $y$, whereas the corresponding  term in the case of $\mathbb R^m \sharp \mathcal R^n$ might depend on $x$ and $y$.

\begin{definition} \label{approximation} A family of kernels $\phi_t (x,y)$, $t> 0$, is said to be a generalised approximation to  the identity if $| \phi_t (x,y) |$
has the same upper bound as $C P_{\alpha t,k}(x,y) $ in Theorem \ref{theorem poisson}
for some positive constants $C, k$ and $\alpha$.
\end{definition}

We note that in the proof of our main result, Theorem \ref{main theorem}, we use $e^{-t\sqrt{L}}$ as a generalised approximation to the identity.
While it is true that $e^{-t\sqrt{L}}$ tends to the Identity as $t $ tends to $0$ in $L^2$ sense, we do not need this property in our proof.

The following result is similar to the basic result in $\mathbb R^n$ 
that the operator $\sup_{t >0} \left\vert \phi_t \ast f\right\vert$ is bounded on $L^p$, $1 < p < \infty$ for a suitable  family of kernels $\phi_t$.

\begin{prop}\label{theorem2bis} Assume that $\phi_t (x,y)$, $t> 0$, is  a generalised approximation to the identity on 
$\mathbb R^m \sharp \mathcal R^n$. Define the family of operators $D_t$ by
$$D_t f(x) = \int_{\mathbb R^m \sharp \mathcal R^n} \phi_t (x,y) f(y) \ d\mu (y)$$
for $f \in L^p(\mathbb R^m \sharp \mathcal R^n)$, $1 < p < \infty$.  
Then the operator $T (f)(x) := \sup\limits_{t>0}|D_t f(x)|$ is bounded on $L^p(\mathbb R^m \sharp \mathcal R^n)$, $1 < p < \infty$ and is of weak type $(1,1)$.
\end{prop}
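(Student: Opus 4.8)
The plan is to reduce the assertion to a maximal operator controlled by the explicit kernel majorants already produced in Theorem~\ref{theorem poisson}, and then to invoke the argument underlying Proposition~\ref{theorem2} (equivalently, the scheme of \cite{DLS}). By Definition~\ref{approximation} there are constants $C,\alpha>0$ and an integer $k\ge0$ such that $|\phi_t(x,y)|\le C\,\Phi_{\alpha t}(x,y)$ for all $t>0$, where $\Phi_s(x,y)$ denotes the nonnegative right-hand side of the estimate in Theorem~\ref{theorem poisson} attached to the region in which $(x,y)$ lies; in particular every summand of $\Phi_s$ carries a polynomial factor $\big(\tfrac{s}{s+d(x,y)}\big)^{D+k\vee1}$ or $\big(\tfrac{s}{s+|x|+|y|}\big)^{D+k\vee1}$ with $D\in\{m,n\}$ and exponent at least $D+1$. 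Hence $|D_tf(x)|\le C\int_{\mathbb R^m\sharp\mathcal R^n}\Phi_{\alpha t}(x,y)\,|f(y)|\,d\mu(y)$, so $Tf(x)\le C\,\widetilde T(|f|)(x)$, where $\widetilde Tg(x):=\sup_{s>0}\int\Phi_s(x,y)\,g(y)\,d\mu(y)$ for $g\ge0$. Since $\widetilde T$ is positive and sublinear, it suffices to prove that $\widetilde T$ is bounded on $L^\infty(\mathbb R^m\sharp\mathcal R^n)$ and of weak type $(1,1)$; the $L^p$ bounds for $1<p<\infty$ then follow by Marcinkiewicz interpolation.

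For the $L^\infty$ bound one verifies $\sup_{s>0}\sup_x\int\Phi_s(x,y)\,d\mu(y)\le C$. Fixing $x$ and splitting the integral over $K$, $\mathbb R^m\setminus K$ and $\mathcal R^n\setminus K$, one is reduced to finitely many integrals of a few model types. The principal one is $\int\frac{1}{s^D}\big(\tfrac{s}{s+d(x,y)}\big)^{D+k\vee1}\,d\mu(y)$: on the part of the manifold relevant to the summand in question the volume growth \eqref{eq2s-functiona} gives $V(x,r)\lesssim r^D$, and since $D+k\vee1>D$, decomposing into the annuli $d(x,y)\approx 2^js$ and summing a geometric series yields a bound independent of $s$ and $x$. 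For the flux-type terms $\frac{1}{s^n|x|^a|y|^a}\big(\tfrac{s}{s+|x|+|y|}\big)^{n+k\vee1}$ with $a\in\{m-2,n-2\}$ one integrates over an end, on which, writing $\rho=|y|$ and integrating out the angular variables, $d\mu(y)$ reduces to $\approx\rho^{a+1}\,d\rho$; the factor $|y|^{-a}$ cancels part of this and leaves $\frac{1}{s^n|x|^a}\int_1^\infty\rho\,\big(\tfrac{s}{s+|x|+\rho}\big)^{n+k\vee1}\,d\rho$, which converges since $n+k\vee1>2$, and an elementary estimate (splitting $|x|\lesssim s$ from $|x|\gtrsim s$ and using $|x|\gtrsim1$, $m,n\ge3$) bounds it by a constant. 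The remaining one-sided terms of $\Phi_s$ are even easier, the $y$-integral being over the bounded set $K$ or benefiting from $d(x,y)\gtrsim|x|$ there.

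Finally, the weak type $(1,1)$ bound for $\widetilde T$ --- whose kernels $\Phi_s$ are precisely the upper bounds appearing in Theorem~\ref{theorem poisson} --- is obtained exactly as in the proof of Proposition~\ref{theorem2}, which follows the Calder\'on--Zygmund-type argument of \cite{DLS}: one fixes $\lambda>0$, decomposes $f=g+\sum_ib_i$ at height $\lambda$ (admissible at small scales, since balls of radius $\le1$ satisfy $V(x,r)\approx r^m$, so the space is locally doubling) with $g$ bounded and each $b_i$ supported on a ball $B_i$ with $\sum_i\mu(B_i)\lesssim\lambda^{-1}\|f\|_{L^1}$, controls $\widetilde Tg$ by the $L^\infty$ (or $L^2$) bound, and estimates $\mu\{x\notin\bigcup_i2B_i:\widetilde T(\sum_ib_i)(x)>\lambda\}$ from the off-diagonal decay of $\Phi_s$. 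The one delicate point --- and the essential obstacle relative to the classical doubling case --- is the contribution of the \emph{non-doubling} balls $B_i$ of radius $>1$ whose dilates $2B_i$ cross from the small end into the central part or the large end, so that $\mu(2B_i)$ can be vastly larger than $\mu(B_i)$; these blow-ups are absorbed precisely by the sharp polynomial decay $\big(\tfrac{s}{s+d}\big)^{D+k\vee1}$ (with $k\vee1\ge1$) and the explicit flux factors $|x|^{-(m-2)},|x|^{-(n-2)}$ carried by $\Phi_s$ and inherited from the sharp heat kernel bounds of Theorem~A, and we refer to \cite{DLS} for this bookkeeping. Combining the weak type $(1,1)$ and $L^\infty$ bounds for $\widetilde T$, and recalling $Tf\le C\,\widetilde T(|f|)$, completes the proof.
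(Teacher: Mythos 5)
Your proposal is correct and follows essentially the same route as the paper, whose own argument is just the one-line observation that $|D_t f|$ is dominated by a positive maximal operator with the kernel majorants of Theorem \ref{theorem poisson}, after which it defers to Proposition \ref{theorem2} and the Calder\'on--Zygmund scheme of \cite{DLS} exactly as you do. The only difference is that you additionally write out the uniform $L^\infty$ kernel-integral bound explicitly, which the paper leaves implicit.
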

{\it Sketch of proof:}
We note that  by Definition \ref{approximation} , $D_t | f |(x) $ has the same upper bound as $C (\alpha t \sqrt{L})^k e^{-\alpha t \sqrt{L}} | f |(x) $,
hence the proof follows the same line as that of Proposition \ref{theorem2}.

 \begin{remark}
 Theorem \ref{theorem poisson}, Propositions \ref{theorem2} and \ref{theorem2bis} are of independent interest as they are
 useful for the study of harmonic analysis on the setting of non-doubling manifolds with ends.
 \end{remark}

\section{Proof of main result: Theorem \ref{main theorem}}
\setcounter{equation}{0}

To begin with, we first recall the standard definition of the maximal function and its properties. For any  $p \in [1,\infty]$ and any function $f\in L^p$
we set
$$
\M f(x) =\sup \left\{ \frac{1}{|B(y,r)|}\int_{B(y,r)}|f(z)|dz\colon x\in B(y,r) \right\}.
$$
\begin{thmC}[\cite{DLS}]\label{max}
The maximal function operator is of weak type $(1,1)$ and bounded on all $L^p$ spaces for $1<p\leq \infty$.
\end{thmC}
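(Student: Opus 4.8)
\emph{Proof proposal.} Since $\mathbb R^m\sharp\mathcal R^n$ is non-doubling, the classical proof of the maximal theorem — which combines the Vitali covering lemma with the doubling property — breaks down, so the plan is to extract exactly what is needed from the explicit volume growth \eqref{eq2s-functiona}. I would first split $\M f\le\M^{\mathrm{loc}}f+\M^{\infty}f$, the supremum defining $\M^{\mathrm{loc}}f$ ranging over balls $B(y,r)\ni x$ with $r\le1$ and that defining $\M^{\infty}f$ over those with $r>1$, and establish a weak type $(1,1)$ estimate for each; the $L^p$ bounds for $1<p<\infty$, together with the endpoint $p=\infty$, then follow by Marcinkiewicz interpolation with the trivial estimate $\|\M f\|_\infty\le\|f\|_\infty$.

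For $\M^{\mathrm{loc}}$: by \eqref{eq2s-functiona} the manifold is uniformly $m$-Ahlfors regular at scales $\le1$, so the doubling constant among balls of radius $\le2$ is bounded independently of the centre; the standard Vitali argument then applies unchanged. (Alternatively, $x\in B(y,r)$ with $r\le1$ forces $V(y,r)\approx V(x,2r)$, so $\M^{\mathrm{loc}}f(x)$ is dominated by a centred average of radius $\le2$, which by Theorem~A is $\lesssim\sup_{t>0}e^{-t\Delta}|f|(x)$, whence Theorem~B applies.)

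The substance is $\M^{\infty}$. A useful preliminary observation: if $\M^{\infty}f(x)>\lambda$, some ball of radius $r>1$ has $\lambda V(y,r)<\int_{B(y,r)}|f|\le\|f\|_1$, and as $V(y,r)\ge V(y,1)\gtrsim1$ this forces $\lambda\lesssim\|f\|_1$; hence it suffices to bound $\mu\{\M^{\infty}f>\lambda\}$ when $\lambda\lesssim\|f\|_1$, a regime in which any set of fixed finite measure has measure $\lesssim\|f\|_1/\lambda$. Fix a constant $C_0$ large compared with $\operatorname{diam}K$, and decompose $M$ into the slab $\{d(\cdot,K)\le C_0\}$ (of fixed finite measure, hence harmless by the observation), the deep part of the large end, and the deep part of the small end. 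For $x$ in the deep large end, with $D:=d(x,K)$, consider an arbitrary ball $B(y,r)\ni x$ with $r>1$. If $B(y,r)\subset\mathbb R^m\backslash K$, then — the large end being isometric to $\mathbb R^m$ outside a compact set, so that balls there of radius $>1$ are comparable to Euclidean ones — the average $\frac1{V(y,r)}\int_{B(y,r)}|f|$ is dominated by $C\,\M_{\mathbb R^m}\!\big((f|_{\mathbb R^m\backslash K})\circ\iota^{-1}\big)(\iota(x))$, which is of weak type $(1,1)$ on $\mathbb R^m$ and transfers back along the measure-preserving isometry $\iota$. If instead $B(y,r)$ meets $K$, then a triangle-inequality argument through $K$ shows $B(y,r)$ contains the large-end $\tfrac12D$-neighbourhood of $K$, so $V(y,r)\gtrsim D^m$ by \eqref{eq2s-functiona}; hence $\frac1{V(y,r)}\int_{B(y,r)}|f|\le C\|f\|_1/D^m$, which exceeds $\lambda$ only when $D<\rho:=(C\|f\|_1/\lambda)^{1/m}$, i.e.\ only when $x$ lies in the large-end $\rho$-neighbourhood of $K$, a set of measure $\approx\rho^m=C\|f\|_1/\lambda$. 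The deep small end is handled identically with $m$ replaced by $n$ throughout: balls inside $\mathcal R^n\backslash K$ are controlled by the (doubling) Hardy--Littlewood maximal function on $\mathcal R^n=\mathbb R^n\times\mathbb S^{m-n}$, while balls meeting $K$ satisfy $V(y,r)\gtrsim D^n$, confining $x$ to the small-end $(C\|f\|_1/\lambda)^{1/n}$-neighbourhood of $K$, of measure $\approx C\|f\|_1/\lambda$. Summing the four contributions gives $\mu\{\M^{\infty}f>\lambda\}\lesssim\|f\|_1/\lambda$.

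The hard part is the dichotomy for balls meeting $K$: since $V(y,r)$ can be of order $r^n$ on the small end for every $r$ up to $\sim|y|$, the naive bound $\frac1{V(y,r)}\int|f|\le\|f\|_1/r^n$ is far too weak to localise $x$. What makes the argument go through is that the relevant scale is not the radius $r$ but the depth to which the ball penetrates the end containing $x$: a ball through a point at distance $D$ from $K$ automatically engulfs a full $m$- (respectively $n$-) dimensional ball of radius $\sim D$ in that end, so its volume is $\gtrsim D^m$ (respectively $\gtrsim D^n$) — and this exponent is exactly the one dual to the measure of the $\rho$-neighbourhoods of $K$ in that end, which is what lets the non-doubling blow-up of volume across $K$ be absorbed and the sharp $\|f\|_1/\lambda$ bound emerge.
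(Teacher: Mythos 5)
The paper offers no proof of Theorem C --- it is quoted verbatim from \cite{DLS} --- so your argument has to be judged on its own merits, and it essentially holds up. The skeleton is right: the split at radius $1$, uniform local doubling from \eqref{eq2s-functiona} so that the Vitali argument handles $\M^{\mathrm{loc}}$, the observation that $V(y,r)\gtrsim 1$ for $r>1$ reduces matters to the regime $\lambda\lesssim\|f\|_{1}$ (so that exceptional sets of bounded measure are free), and above all the dichotomy for global balls according to whether they meet $K$, with the volume lower bound $D^{m}$ (resp.\ $D^{n}$) expressed in terms of the depth $D=d(x,K)$ of the point $x$ rather than the radius of the ball. That last point, together with the exact match between the exponent in the volume bound and the measure of the $\rho$-neighbourhoods of $K$ in each end, is the heart of the matter, and you have identified it correctly.

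One step is stated too strongly: it is not true in general that a ball $B(y,r)\ni x$ meeting $K$ contains the large-end $\tfrac12 D$-neighbourhood of $K$. If $y$ lies in the \emph{same} end as $x$ with $d(y,K)$ just under $r$, then a point at depth $D/2$ on the ``far side'' of $K$ is at distance roughly $d(y,K)+\operatorname{diam}K+D/2>r$ from $y$, so it escapes the ball. The conclusion you actually need, $V(y,r)\gtrsim D^{m}$, nevertheless survives: picking $k_{0}\in K\cap B(y,r)$ one gets $D\le d(x,k_{0})\le d(x,y)+d(y,k_{0})<2r$, so $r>D/2$, and since $y$ then lies in $\mathbb{R}^{m}\setminus K$ (or in $K$) with $r>1$, \eqref{eq2s-functiona} gives $V(y,r)\approx r^{m}\gtrsim D^{m}$ directly. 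Your neighbourhood-containment argument is valid precisely in the remaining case where $y$ lies in the opposite end, since there the geodesic from $y$ to $x$ must cross $K$ and yields $r>d(x,y)\ge d(y,K)+D$, which does force $B(y,r)$ to swallow the large-end $\tfrac12 D$-neighbourhood of $K$. With that case distinction inserted (and its mirror image on the small end, where the same-end case gives $V(y,r)\gtrsim r^{n}\gtrsim D^{n}$), the proof is complete.
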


Now to prove Theorem \ref{main theorem},
it suffices to show that there exists a positive constant $C$ such that for $f\in L^1(\mathbb R^m \sharp \mathcal R^n)$ and for every $\lambda>0$,  
\begin{align}\label{weak11}
|\{ x\in \mathbb R^m \sharp \mathcal R^n: |\frak{M}(\sqrt L)f (x) |>\lambda \}|\leq C{\|f\|_{L^1(\mathbb R^m \sharp \mathcal R^n)}\over \lambda}.
\end{align} 
Then, to prove \eqref{weak11}, it suffices to verify the following three inequalities:
\begin{align}\label{e1}
|\{ x\in \mathbb{R}^m\backslash K: |\frak{M}(\sqrt L)f (x) |>\lambda \}|\leq  C{\|f\|_{L^1(\mathbb{R}^m\, \sharp\, \mathbb{R}^n)}\over \lambda},
\end{align} 
\begin{align}\label{e2}
|\{ x\in \mathcal{R}^n\backslash K: |\frak{M}(\sqrt L)f (x) |>\lambda \}|\leq  C{\|f\|_{L^1(\mathbb{R}^m\, \sharp\, \mathbb{R}^n)}\over \lambda},
\end{align} 
and\begin{align}\label{e3}
|\{ x\in K: |\frak{M}(\sqrt L)f (x) |>\lambda \}|\leq  C{\|f\|_{L^1(\mathbb{R}^m\, \sharp\, \mathbb{R}^n)}\over \lambda}.
\end{align}

We now set 
$$ f_1(x):=f(x)\chi_{\mathbb{R}^m\backslash K},\  f_2(x):=f(x)\chi_{\mathcal{R}^n\backslash K},\ {\rm and}\ f_3(x):=f(x)\chi_{K}.   $$
Thus, $f$ can be written as 
$$ f=f_1+f_2+f_3. $$

Since $\frak{M}(\sqrt L)$ is a linear operator, the measure in the left-hand side of \eqref{e1} satisfies
\begin{align*}
|\{ x\in \mathbb{R}^m\backslash K: |\frak{M}(\sqrt L)f (x) |>\lambda \}|
&\leq \left|\left\{ x\in \mathbb{R}^m\backslash K: |\frak{M}(\sqrt L)f_1 (x) |> {\lambda\over3} \right\}\right|\\
&\quad+ \left|\left\{ x\in \mathbb{R}^m\backslash K: |\frak{M}(\sqrt L)f_2 (x) |>{\lambda\over3} \right\}\right|\\
&\quad\quad+\left|\left\{ x\in \mathbb{R}^m\backslash K: |\frak{M}(\sqrt L)f_3 (x) |>{\lambda\over3} \right\}\right|\\
&=: I_1+I_2+I_3.
\end{align*} 
Similarly, we decompose and obtain the measure of the set $\{ x\in \mathcal{R}^n\backslash K: |\frak{M}(\sqrt L)f (x) |>\lambda \}$ is
 bounded by the sum of three parts
$II_1 +  II_2 + II_3$ and  the measure of $\{ x\in  K: |\frak{M}(\sqrt L )f (x) |>\lambda \}$ is bounded by the sum
$III_1+III_2+III_3$ which correspond to the components $f_1, f_2$ and $f_3$ respectively.


It then suffices to prove that each of the terms above has an upper estimate of the form $C\frac{\left\| f\right\|_{L^{1}(\mathbb R^m \sharp \mathcal R^n)}}{\lambda}$.

\subsection{Estimate of $I_1$}


In this case, since $x$ is in $\mathbb{R}^m\backslash K$ and the function $f_1$ is also supported in $\mathbb{R}^m\backslash K$,
we can restrict to the setting $\mathbb{R}^m\backslash K$, where the measure now becomes the standard Lebesgue measure on $\mathbb{R}^m\backslash K$ which is doubling.  However, the non-homogeneous property shows up in the kernel estimate in this case.
 The Poisson kernel here is not bounded by the classical upper bound and the main difficulty comes from the term
$$\frac{C}{t^n|x|^{m-2}|y|^{m-2}}\Big(\frac{t}{t+|x|+|y|}\Big)^{n+k\vee1}$$
in the upper bound of the Poisson kernel, where the power of the time scaling is $n$ which can be much smaller than the space dimension $m$ while we have extra decay from the terms $|x|^{m-2}$ and $|y|^{m-2}$. Hence, the new method here is to have a refined classification 
of the dyadic cubes (see $\mathcal I_1$ and $\mathcal I_2$ below in the proof) such that 
for most of the cubes (see the term $I_{122}$ below), the terms $|x|^{m-2}$ and $|y|^{m-2}$ can provide suitable decay that
makes a compensation of the lack of power of the time scaling $t$ and that for the rest of the cubes (see the term $I_{123}$ below), the kernel of $m(\sqrt L)$ itself has proper decay which enables the weak type estimate holds.

To begin the proof, we now restrict the setting to $\mathbb{R}^m\backslash K$, and $f_1 $ is in $L^1(\mathbb{R}^m\backslash K)$.
Extend $f_1$ to the whole of $\mathbb{R}^m$ by zero extension, i.e., define $f_1(x):=0$ when $x\in K$.


%
%

We now consider the standard Calder\'on--Zygmund decomposition as follows. Recall that the standard dyadic cubes in 
$\mathbb{R}^m$ are of the form
$$[2^{k}a_1, 2^k(a_1+1))\times \cdots\times [2^{k}a_m, 2^k(a_m+1)),$$
where $k,a_1,\ldots,a_m$ are integers.  Decompose $\mathbb{R}^m$ into a mesh of equal size disjoint dyadic cubes so that
$$ |Q|\geq {1\over \lambda}\|f_1\|_{L^1(\mathbb{R}^m)} $$
for every cube in the mesh. Subdivide each cube in the mesh into $2^m$ congruent cubes by bisecting each of its sides. We now have a new mesh of dyadic cubes. Select a cube in the new mesh if
\begin{align}\label{stopping}
 {1\over |Q|}\int_Q |f_1(x)|dx >\lambda.
\end{align}
Let $\mathcal S$ be the set of all these selected cubes. Now subdividing each non-selected cube into $2^m$ congruent subcubes by bisecting each side as before. Then select one of these new cubes if \eqref{stopping} holds. Put all these selected cubes of this generation into the set $\mathcal S$. Repeat this procedure indefinitely.

Then we have $\mathcal S=\cup_j Q_j$, where all these $Q_j's$ are disjoint, and we further have
$$ |S|=\sum_j |Q_j| \leq {1\over \lambda} \sum_j \int_{Q_j} |f_1(x)|dx \leq {1\over \lambda} \|f_1\|_{L^1(X)}. $$
Define
$$ b_j(x):=\bigg( f_1(x)- {1\over |Q_j|}\int_{Q_j}f_1(y) dy \bigg)\chi_{Q_j}(x)$$
and
$$ b(x):=\sum_j b_j(x), \quad g(x):=f_1(x)-b(x).$$
For a selected $Q_j$, there exists a unique non-selected dyadic cube $Q'$ with twice its side length that contains $Q_j$. Since
$Q'$ is not selected, we get that
$$ {1\over |Q'|}\int_{Q'} |f_1(y)|dy \leq \lambda, $$
which implies that
$$ {1\over |Q_j|}\int_{Q_j}|f_1(y)| dy\leq {2^m\over |Q'|}\int_{Q'} |f(y)|dy \leq 2^m\lambda. $$
For the good part $g(x)$, since $b=0$ on $F:=\mathbb{R}^m\backslash S$, we have
$$ g(x)=f_1(x)\ {\rm on}\ F,\quad{\rm and}\quad g(x)={1\over |Q_j|}\int_{Q_j} f_1(x)dx \ {\rm on}\ Q_j.$$
Then it is easy to verify that $$\|g\|_{L^1(\mathbb{R}^m)}\leq \|f\|_{L^1(\mathbb{R}^m)}\quad{\rm and}\quad \|g\|_{L^\infty(\mathbb{R}^m)}\leq C\lambda.$$

We now have
\begin{align*}
I_1&\leq \left|\left\{ x\in \mathbb{R}^m\backslash K: |\frak{M}(\sqrt L )g (x) |> {\lambda\over6} \right\}\right|\\
&\quad+  \left|\left\{ x\in (\mathbb{R}^m\backslash K) \backslash \cup_i 8Q_i: \big|\frak{M}(\sqrt L ) \big(\sum_j b_{j}\big)  (x) \big|>{\lambda\over6} \right\}\right|\\
&\quad+ | \cup_i 8Q_i|\\
&=: I_{11}+I_{12}+I_{13}. 
\end{align*} 

As for $I_{11}$, by using the $L^2$ boundedness of $\frak{M}(\sqrt L )$, we obtain that
\begin{align*}
I_{11}&= \left|\left\{ x\in \mathbb{R}^m\backslash K: |\frak{M}(\sqrt L )g (x) |>{\lambda\over6} \right\}\right|\\
&\leq {C\over \lambda^2} \|g\|_{L^2(\mathbb{R}^m\backslash K)}^2\\
&\leq {C\over \lambda} \|f\|_{L^1(\mathbb{R}^m\sharp\mathcal{R}^n)},
\end{align*} 
where we use the fact that $|g(x)|\leq C\lambda$.


As for $I_{13}$, note that we have the doubling condition in this case. So we get that
\begin{align*}
I_{13}\leq  C \sum_i |Q_i|\leq  C{\|f\|_{L^1(\mathbb{R}^m\sharp\mathcal{R}^n)}\over \lambda}.
\end{align*}

As for $I_{12}$, we now split all the $Q_i$'s in the set $\mathcal S$ into two groups: 
$$\mathcal{I}_1:=\{i: \text{none of the corners of  $Q_i$ is the origin}\},$$ 
and 
$$\mathcal{I}_2=\{i: \text{one of the corners of  $Q_i$ is the origin}\}.
$$
Write
$$
 \begin{aligned}
 \frak{M}(\sqrt L )\big(\sum_i b_{i}\big)(x) = \sum_{i\in \mathcal{I}_1} \frak{M}(\sqrt L ) b_{i} (x)+\sum_{i\in \mathcal{I}_2} \frak{M}(\sqrt L ) b_{i} (x).
 \end{aligned} 
 $$
For each $i\in \mathcal{I}_1$, we further decompose
$$ \frak{M}(\sqrt  L )b_{i} (x) = \frak{M}(\sqrt L ) e^{-t_i\sqrt{ L }} b_{i}  (x)  + \frak{M}(\sqrt L )\big(I- e^{-t_i\sqrt{ L }}\big) b_{i}  (x), $$
where $\{e^{-t\sqrt{ L }}\}_{t>0}$ is the Poisson semigroup of $L$ as studied in Section 2, and for each $i$, $t_i$ is the side length of the cube $Q_i$.

 Then we have
\begin{align*}
I_{12}&\leq  \left|\left\{ x\in (\mathbb{R}^m\backslash K) \backslash \cup_i 8Q_i: \big|\frak{M}(\sqrt L ) \big(\sum_{i\in \mathcal{I}_1} e^{-t_i\sqrt{ L }} b_{i}\big)  (x) \big|>{\lambda\over18} \right\}\right|\\
&\quad+ \left|\left\{ x\in (\mathbb{R}^m\backslash K) \backslash \cup_i 8Q_i: \Big|\frak{M}(\sqrt L ) \Big(\sum_{i\in\mathcal{I}_1} \big(I- e^{-t_i\sqrt{ L }}\big) b_{i}\Big)  (x) \Big|>{\lambda\over18} \right\}\right|\\
&\quad+\left|\left\{ x\in (\mathbb{R}^m\backslash K) \backslash \cup_i 8Q_i: \big|\frak{M}(\sqrt L ) \big(\sum_{i\in \mathcal{I}_2} b_{i}\big)  (x) \big|>{\lambda\over18} \right\}\right|\\
&=: I_{121}+I_{122}+I_{123}. 
\end{align*}

We first estimate $ I_{121}$. To see this, we claim that
\begin{align}\label{claim121}
\Big\|\sum_{i\in \mathcal{I}_1} e^{-t_i\sqrt{L}} b_{i}\Big\|_{L^2(\mathbb{R}^m\sharp\mathcal{R}^n)}
\leq  C\lambda^{1\over2}\|f_1\|_{L^1(\mathbb{R}^m\sharp\mathcal{R}^n)}^{1\over2}.
\end{align}
To verify this claim, it suffices to show the following 3 cases:
\begin{align}\label{claim121case1}
\Big\|\sum_{i\in \mathcal{I}_1} e^{-t_i\sqrt{L}} b_{i}\Big\|_{L^2(\mathbb{R}^m\backslash K)}
\leq  C\lambda^{1\over2}\|f_1\|_{L^1(\mathbb{R}^m\sharp\mathcal{R}^n)}^{1\over2},
\end{align}
\begin{align}\label{claim121case2}
\Big\|\sum_{i\in \mathcal{I}_1} e^{-t_i\sqrt{L}} b_{i}\Big\|_{L^2(\mathcal{R}^n\backslash K)}
\leq  C\lambda^{1\over2}\|f_1\|_{L^1(\mathbb{R}^m\sharp\mathcal{R}^n)}^{1\over2},
\end{align}
and
\begin{align}\label{claim121case3}
\Big\|\sum_{i\in \mathcal{I}_1} e^{-t_i\sqrt{L}} b_{i}\Big\|_{L^2(K)}
\leq  C\lambda^{1\over2}\|f_1\|_{L^1(\mathbb{R}^m\sharp\mathcal{R}^n)}^{1\over2}.
\end{align}
Hence, combining the estimates of \eqref{claim121case1}, \eqref{claim121case2} and \eqref{claim121case3}, for $I_{121}$, we get that
\begin{align*}
I_{121}&\leq  \left|\left\{ x\in (\mathbb{R}^m\backslash K) \backslash \cup_i 8Q_i: \big|\frak{M}(\sqrt{L}) \big(\sum_{j\in\mathcal{I}_1} e^{-t_j\sqrt{L}} b_{j}\big)  (x) \big|>{\lambda\over18} \right\}\right|\\
&\leq {C\over \lambda^2} \Big\|\sum_{i\in\mathcal{I}_1} e^{-t_i\sqrt{L}} b_{i}\Big\|_{L^2(\mathbb{R}^m\sharp\mathcal{R}^n)}^2\\
&\leq {C\over \lambda} \|f_1\|_{L^1(\mathbb{R}^m\sharp\mathcal{R}^n)}.
\end{align*}

We first estimate \eqref{claim121case1}.  Consider the function $ e^{-t_i\sqrt{L}} b_{i}  (x)$ for $x\in \mathbb{R}^m\backslash K$.
Since 
$$ e^{-t_i\sqrt{L}} b_{i}  (x) = \int_{\mathbb{R}^m\backslash K} P_{t_i}(x,y) b_{i} (y)dy, $$
applying the upper bound in point 5 in Theorem \ref{theorem poisson} for $ |P_{t_i}(x,y)| $ we obtain that
\begin{align*}
|e^{-t_i\sqrt{ L}} b_{i}  (x)| &\leq \int_{\mathbb{R}^m\backslash K} |P_{t_i}(x,y)|\, |b_{i} (y)|dy\\
&\leq C\int_{\mathbb{R}^m\backslash K} \bigg({t_i\over |x|^{m-2}|y|^{m-2}(t_i+|x|+|y|)^{n+1}}+{t_i\over  (t_i+d(x,y))^{m+1}}\bigg)\, |b_{i}(y)|dy\\
&= C\int_{Q_i} {t_i\over |x|^{m-2}|y|^{m-2}(t_i+|x|+|y|)^{n+1}}\, |b_{i} (y)|dy\\
&\quad+ C\int_{\mathbb{R}^m\sharp\mathcal{R}^n} {t_i\over  (t_i+d(x,y))^{m+1}}\, |b_{i} (y)|dy\\
&=: F_{1,i} + F_{2,i}.
\end{align*}

We turn to estimating the term $F_{2,i}$.  In this case, we have $x\in \mathbb{R}^m\backslash K$ and $Q_i\subset \mathbb{R}^m\backslash K$, dyadic, with none of the corners of  $Q_i$ being the origin.
This implies that
$$\sup_{z\in Q_i} {t_i\over  (t_i+d(x,z))^{m+1}} \le C \inf_{z\in Q_i} {t_i\over  (t_i+d(x,z))^{m+1}} .$$
Therefore
\begin{align*}
F_{2,i}&\leq C\sup_{z\in Q_i} {t_i\over  (t_i+d(x,z))^{m+1}} \int_{\mathbb{R}^m\sharp\mathcal{R}^n} \, |b_{i}(y)|dy\\
&\leq C\inf_{z\in Q_i} {t_i\over  (t_i+d(x,z))^{m+1}}\ \lambda |Q_i|\\
&\leq  C\lambda\int_{\mathbb{R}^m\sharp\mathcal{R}^n} {t_i\over  (t_i+d(x,z))^{m+1}}\chi_{Q_i}(z)dz,
\end{align*}
where $\chi_{Q_i}$ is the characteristic function of $Q_i$.

%
For any $h\in L^2(\mathbb{R}^m\backslash K)$ with $\|h\|_{L^2(\mathbb{R}^m\backslash K)}=1$, we get that
\begin{align*}
\langle F_{2,i}, h\rangle
&= C\lambda \int_{\mathbb{R}^m\sharp\mathcal{R}^n}  \int_{\mathbb{R}^m\backslash K} {t_i\over  (t_i+d(x,z))^{m+1}} h(x)dx\ \chi_{Q_i}(z)dz \\
&\leq C\lambda \langle \mathcal M(h), \chi_{Q_i}\rangle.
\end{align*}
As a consequence we obtain that
$$ \Big\langle \sum_{i\in \mathcal{I}_1}  F_{2,i}, h\Big\rangle \leq C\lambda \Big\langle \mathcal M(h),  \sum_{i\in \mathcal{I}_1}  \chi_{Q_i}\Big\rangle, $$
which yields
\begin{align*}
\Big\|\sum_{i\in \mathcal{I}_1} F_{2,i}\Big\|_{L^2(\mathbb{R}^m\backslash K)} &\leq C\lambda \Big\|\sum_{i\in \mathcal{I}_1} \chi_{Q_i}\Big\|_{L^2(\mathbb{R}^m\backslash K)} \leq C\lambda  \bigg(\sum_{i\in \mathcal{I}_1} |Q_i|\bigg)^{1/2}\\
&\leq  C\lambda {\|f_1\|_{L^1(\mathbb{R}^m\sharp\mathcal{R}^n)}^{1\over2}\over \lambda^{1\over2}}\\
&\leq  C\lambda^{1\over2} \|f_1\|_{L^1(\mathbb{R}^m\sharp\mathcal{R}^n)}^{1\over2}.
\end{align*}

To handle $F_{1,i}$, we note that the distance of $Q_i$ to the center $K$ is comparable to the side length of $Q_i$ for $i\in \mathcal{I}_1$ since none of the corners of  $Q_i$ are the origin.  Hence, we obtain that 
\begin{equation}
\label{supinf Q}
\sup_{z\in Q_i}|z| \approx \inf_{z\in Q_i}|z|.
\end{equation} Thus, 
we further obtain that
\begin{align*}
F_{1,i}&\leq C\sup_{z\in Q_i} {t_i\over |x|^{m-2}|z|^{m-2}(t_i+|x|+|z|)^{n+1}}\ \int_{\mathbb{R}^m\sharp\mathcal{R}^n} \, |b_{i}(y)|dy\\
&\leq C\inf_{z\in Q_i} {t_i\over |x|^{m-2} |z|^{m-2}(t_i+|x|+|z|)^{n+1}} \lambda |Q_i|\\
&\leq  C\lambda\int_{\mathbb{R}^m\sharp\mathcal{R}^n} {t_i\over |x|^{m-2}|z|^{m-2}(t_i+|x|+|z|)^{n+1}}\chi_{Q_i}(z)dz.
\end{align*}
Consequently, for any $h\in L^2(\mathbb{R}^m\backslash K)$ with $\|h\|_{L^2(\mathbb{R}^m\backslash K)}=1$,
\begin{align*}
|\langle F_{1,i}, h\rangle|
&\leq C\lambda \int_{\mathbb{R}^m\backslash K}  \int_{\mathbb{R}^m\backslash K}   {t_i\over |x|^{m-2}|z|^{m-2}(t_i+|x|+|z|)^{n+1}}|h(x)|dx\  \chi_{Q_i}(z)dz\\
&\leq C \lambda \int_{\mathbb{R}^m\backslash K}  \mathcal G(h)(z)  \chi_{Q_i}(z)dz,
\end{align*}
where $\mathcal G$ is an operator defined as 
$$\mathcal  G(h)(z) :=  \int_{\mathbb{R}^m\backslash K}   {1\over |x|^{m-2}|z|^{m-2}(|x|+|z|)^{n}}|h(x)|dx.$$
Next, it is direct to see that $\mathcal G$ is a bounded operator on $L^2(\mathbb{R}^m\backslash K)$:
\begin{align*}
\|\mathcal  G(h)\|_{L^2(\mathbb{R}^m\backslash K)}^2
& \leq \int_{\mathbb{R}^m\backslash K}\int_{\mathbb{R}^m\backslash K}   {1\over |x|^{2m-4}|z|^{2m-4}(|x|+|z|)^{2n}}dx  \|h\|_{L^2(R^m\backslash K)}^2 dz\\
& \leq \|h\|_{L^2(\mathbb{R}^m\backslash K)}^2\int_{\mathbb{R}^m\backslash K}\int_{\mathbb{R}^m\backslash K}   {1\over |x|^{2m-4}|z|^{2m-4}|x|^{n}|z|^{n}}dx  dz\\
& \leq \|h\|_{L^2(\mathbb{R}^m\backslash K)}^2\int_{\mathbb{R}^m\backslash K}   {1\over |x|^{2m-4}|x|^{n}}dx  \, \int_{\mathbb{R}^m\backslash K}   {1\over |z|^{2m-4}|z|^{n}}  dz\\
&\leq C\|h\|_{L^2(\mathbb{R}^m\backslash K)}^2,
\end{align*}
where in the last inequality we use the condition that $m>n>2$.

As a consequence, similar to the estimates for $\sum_{i\in \mathcal{I}_1} F_{2,i}$, we obtain that
\begin{align*}
\Big\|\sum_{i\in \mathcal{I}_1} F_{1,i}\Big\|_{L^2(\mathbb{R}^m\backslash K)} &\leq C\lambda 
\Big\|\sum_{i\in \mathcal{I}_1} \chi_{Q_i}\Big\|_{L^2(\mathbb{R}^m\backslash K)} \\
&\leq C\lambda  \bigg(\sum_i |Q_i|\bigg)^{1/2}\\
&\leq  C\lambda^{1\over2}\|f_1\|_{L^1(\mathbb{R}^n\sharp\mathbb{R}^m)}^{1\over2}.
\end{align*}

Combining the estimates with respect to $F_{1,i}$ and $F_{2,i}$ above, we deduce that
\begin{align*}
\Big\|\sum_{i\in \mathcal{I}_1} e^{-t_i\sqrt{L}} b_{i}\Big\|_{L^2(\mathbb{R}^m\backslash K)}
&\leq \Big\|\sum_{i\in \mathcal{I}_1} F_{1,i}\Big\|_{L^2(\mathbb{R}^m\backslash K)}+\Big\|\sum_{i\in \mathcal{I}_1} F_{2,i}\Big\|_{L^2(\mathbb{R}^m\backslash K)}\\
&\leq  C\lambda^{1\over2}\|f_1\|_{L^1(\mathbb{R}^m\sharp\mathcal{R}^n)}^{1\over2},
\end{align*}
which shows that
the claim \eqref{claim121case1} holds.

\smallskip

We now estimate \eqref{claim121case2}.
Consider the function $ e^{-t_i\sqrt{\Delta}} b_{i}  (x)$ for $x\in \mathcal{R}^n\backslash K$.
Since 
$$ e^{-t_i\sqrt{\Delta}} b_{i}  (x) = \int_{\mathbb{R}^m\backslash K} P_{t_i}(x,y) b_{i} (y)dy, $$
applying the upper bound in point 4 in Theorem \ref{theorem poisson} for $ |P_{t_i}(x,y)| $ we obtain that
 \begin{align*}
&|e^{-t_i\sqrt{ L}} b_{i}  (x)|\\
 &\leq \int_{\mathbb{R}^m\sharp\mathcal{R}^n} |P_{t_i}(x,y)|\, |b_{i} (y)|dy\\
&\leq C\int_{\mathbb{R}^m\sharp\mathcal{R}^n} \bigg(\frac{t_i}{(t_i+d(x,y))^{m+1}}+ \frac{1}{|x|^{n-2}}\frac{t_i}{(t_i+d(x,y))^{m+1}}+\frac{1}{|y|^{m-2}}\frac{t}{(t_i+d(x,y))^{n+1}}
\bigg)\, |b_{i}(y)|dy\\
&\leq C\int_{\mathbb{R}^m\sharp\mathcal{R}^n} \frac{t_i}{(t_i+d(x,y))^{m+1}} |b_{i}(y)|dy +C\int_{\mathbb{R}^m\sharp\mathcal{R}^n} \frac{1}{|y|^{m-2}}\frac{t_i}{(t_i+d(x,y))^{n+1}}
 |b_{i}(y)|dy\\
&=: G_{1,i} + G_{2,i}
\end{align*}
where the third inequality follows from the fact that $|x| \ge 1$, hence the second term in the integrand is dominated by the first term.

By the equivalence in \eqref{supinf Q}, we have
\begin{align*}
G_{2,i}&\leq C\int_{\mathbb{R}^m\sharp\mathcal{R}^n}\frac{1}{|y|^{m-2}}\frac{t_i}{(t_i+d(x,y))^{n+1}}|b_{i}(y)|dy\\
&\leq C\sup_{z\in Q_i}\frac{1}{|z|^{m-2}}\frac{t_i}{(t_i+d(x,z))^{n+1}}\int_X |b_{i}(y)|dy\\
&\leq C\inf_{z\in Q_i}\frac{1}{|z|^{m-2}}\frac{t_i}{(t_i+d(x,z))^{n+1}}\int_X |b_{i}(y)|dy\\
&\leq  C\lambda\int_{\mathbb{R}^m\sharp\mathcal{R}^n} \frac{1}{|z|^{m-2}}\frac{t_i}{(t_i+d(x,z))^{n+1}}\chi_{Q_i}(z)dz.
\end{align*}


So for any $h\in L^2(\mathcal{R}^n\backslash K)$ with $\|h\|_{L^2(\mathcal{R}^n\backslash K)}=1$,
\begin{align*}
|\langle G_{2,i}, h\rangle|
&\leq C\lambda \int_{\mathbb{R}^m\backslash K}  \int_{\mathcal{R}^n\backslash K}\frac{1}{|z|^{m-2}}\frac{t_i}{(t_i+d(x,z))^{n+1}}|h(x)|dx\  \chi_{Q_i}(z)dz\\
&\leq C \lambda \int_{\mathbb{R}^m\backslash K}  \mathcal T(h)(z)  \chi_{Q_i}(z)dz,
\end{align*}
where the operator $\mathcal T$ is defined as 
$$\mathcal  T(h)(z) :=  \int_{\mathcal{R}^n\backslash K}   \frac{1}{|z|^{m-2}}\frac{t_i}{(t_i+d(x,z))^{n+1}}|h(x)|dx.$$
Once again, it is direct to see that $\mathcal T$ is a bounded operator on $L^2(\mathbb{R}^m\backslash K)$:
\begin{align*}
\|\mathcal  T(h)\|_{L^2(\mathbb{R}^m\backslash K)}^2
& \leq  \|h\|_{L^2(\mathcal{R}^n\backslash K)}^2 \int_{\mathbb{R}^m\backslash K}\int_{\mathcal{R}^n\backslash K}   {1\over |z|^{2m-4}}\frac{t^2_i}{(t_i+|x|+|z|)^{2(n+1)}} dx dz\\
& \leq \|h\|_{L^2(\mathcal{R}^n\backslash K)}^2\int_{\mathbb{R}^m\backslash K}\int_{\mathcal{R}^n\backslash K}   \frac{1}{t^n}\Big(\frac{t_i}{t_i+|x|}\Big)^{n+2} dx \frac{1}{|z|^{2m+n-4}} dz\\
&\leq C\|h\|_{L^2(\mathcal{R}^n\backslash K)}^2,
\end{align*}
where in the last inequality we use the condition that $m>n>2$.

As a consequence we obtain that
\begin{align*}
\Big\|\sum_{i\in \mathcal{I}_1} G_{2,i}\Big\|_{L^2(\mathcal{R}^n\backslash K)} &\leq C\lambda 
\Big\|\sum_{i\in \mathcal{I}_1} \chi_{Q_i}\Big\|_{L^2(\mathbb{R}^m\backslash K)} \\
&\leq C\lambda  \bigg(\sum_i |Q_i|\bigg)^{1/2}\\
&\leq  C\lambda^{1\over2}\|f_1\|_{L^1(\mathbb{R}^m\sharp\mathcal{R}^n)}^{1\over2}.
\end{align*}

Similar to the estimates for the terms $F_{2,i}$, we also obtain
\begin{align*}
\Big\|\sum_{i\in \mathcal{I}_1} G_{1,i}\Big\|_{L^2(\mathcal{R}^n\backslash K)}
&\leq  C\lambda^{1\over2}\|f_1\|_{L^1(\mathbb{R}^m\sharp\mathcal{R}^n)}^{1\over2}.
\end{align*}
Combining the estimates for $G_{1,i}$ and $G_{2,i}$ above, we obtain that \eqref{claim121case2} holds.

\small
We now verify \eqref{claim121case3}.
Consider the function $ e^{-t_i\sqrt{L}} b_{i}  (x)$ for $x\in  K$.
Since 
$$ e^{-t_i\sqrt{L}} b_{i}  (x) = \int_{\mathbb{R}^m\backslash K} P_{t_i}(x,y) b_{i} (y)dy, $$
applying the upper bound in point 5 in Theorem \ref{theorem poisson} for $ |P_{t_i}(x,y)| $ we obtain that 
\begin{align*}
|e^{-t_i\sqrt{L}} b_{i}  (x)|
 &\leq \int_{\mathbb R^m \sharp \mathcal R^n} |P_{t_i}(x,y)|\, |b_{i} (y)|dy\\
&\leq C\int_{\mathbb R^m \sharp \mathcal R^n} \frac{t_i}{(t_i+d(x,y))^{m+1}} |b_{i}(y)|dy +
C\int_{\mathbb R^m \sharp \mathcal R^n}  \frac{1}{|y|^{m-2}}\frac{t}{(t_i+d(x,y))^{n+1}}
 |b_{i}(y)|dy\\
&=: H_{1,i} + H_{2,i}.
\end{align*}
Arguing similarly to the estimates for the terms $G_{1,i}$ and $G_{2,i}$, we get the same estimates for the terms
$H_{1,i}$ and $H_{2,i}$, respectively.
This implies that 
\eqref{claim121case3} holds.

\bigskip

We now consider the term $I_{122}.$ Note that
\begin{align*}
I_{122}&\leq  \left|\left\{ x\in (\mathbb{R}^m\backslash K) \backslash \cup_i 8Q_i: \Big|\frak{M}(\sqrt L) \Big(\sum_j \big(I- e^{-t_j\sqrt{L}}\big) b_{j}\Big)  (x) \Big|>{\lambda\over12} \right\}\right|\\
&\leq {C\over \lambda} \sum_j \int_{(8Q_j)^c} |\frak{M}(\sqrt L) \big(I- e^{-t_j\sqrt{L}}\big) b_{j}(x) | dx.
\end{align*} 
Note that for each $j$, we get that 
%
%
\begin{align}\label{eI122}
&\int_{(8Q_j)^c} |\frak{M}(\sqrt L) \big(I- e^{-t_j\sqrt{L}}\big) b_{j}(x) | dx\\
&\leq \int_{(8Q_j)^c} \int_{Q_j} |k_{t_j}(x,y)| | b_{j}(y)|dy  dx\nonumber\\
&= \int_{Q_j}\ \int_{(8Q_j)^c}|k_{t_j}(x,y)| dx\ | b_{j}(y)|dy\nonumber
\end{align} 
where we use $k_{t_j}(x,y)$ to denote the kernel of of the operator $\frak{M}(\sqrt L) \big(I- e^{-t_j\sqrt{L}}\big) $.

By definition, we have
\begin{align*}
\frak{M}(\sqrt L) \big(I- e^{-t_j\sqrt{L}}\big) &= \int_0^\infty \sqrt L e^{-s\sqrt L} m(s)ds \int_0^{t_j} -{d\over dt} e^{-t\sqrt L} dt\\
&=\int_0^\infty \sqrt L e^{-s\sqrt L} m(s)ds \int_0^{t_j}  \sqrt L e^{-t\sqrt L} dt\\
&=\int_0^{t_j} \int_0^\infty (\sqrt L)^2  e^{-(s+t)\sqrt L}\,m(s)\, ds dt\\
&=\int_0^{t_j} \int_0^\infty  (s+t)^2(\sqrt L)^2  e^{-(s+t)\sqrt L}\,{m(s)\over (s+t)^2}\, ds dt.
\end{align*} 
Hence, we obtain that
\begin{align*}
k_{t_j}(x,y) =\int_0^{t_j} \int_0^\infty P_{s+t,2}(x,y) \,{m(s)\over (s+t)^2}\, ds dt.
\end{align*} 

We now claim that there exists an absolute positive constant $C$ such that
\begin{align}\label{eI122claim}
 \int_{(8Q_j)^c}|k_{t_j}(x,y)| dx\leq C.
\end{align} 

To see this, applying the kernel expression above and Case 5 in Theorem \ref{theorem poisson} for $P_{t,2}(x,y)$, we get that 
\begin{align*}
& \int_{(8Q_j)^c}|k_{t_j}(x,y)| dx\\
&\leq  \int_{(8Q_j)^c} \int_0^{t_j} \int_0^\infty |P_{s+t,2}(x,y)| \,{|m(s)|\over (s+t)^2}\, ds dt  dx\\
 &\leq  \int_{(8Q_j)^c} \int_0^{t_j} \int_0^\infty \frac{C}{(s+t)^m}\Big(\frac{s+t}{s+t+d(x,y)}\Big)^{m+2} \,{1\over (s+t)^2}\, ds dt  dx\\
 &\quad+  \int_{(8Q_j)^c} \int_0^{t_j} \int_0^\infty \frac{C}{(s+t)^n|x|^{m-2}|y|^{m-2}}\Big(\frac{s+t}{s+t+|x|+|y|}\Big)^{n+2} \,{1\over (s+t)^2}\, ds dt  dx\\
 &=: E_1+E_2.
\end{align*} 

We first consider the term $E_1$. Note that
\begin{align*}
 E_1&\leq  C\int_0^{t_j} \int_0^\infty \int_{d(x,y)\geq 2t_j}  \frac{1}{(s+t)^m}\Big(\frac{s+t}{s+t+d(x,y)}\Big)^{m+2} \,{1\over (s+t)^2}\, dx \,ds dt \\
 &\leq C\int_0^{t_j} \int_0^{t_j} \int_{d(x,y)\geq 2t_j}  \Big(\frac{1}{s+t+d(x,y)}\Big)^{m+2}\, dx \,ds dt \\
 &\quad+C  \int_0^{t_j} \int_{t_j}^\infty \int_{d(x,y)\geq 2t_j}  \frac{1}{(s+t)^m}\Big(\frac{s+t}{s+t+d(x,y)}\Big)^{m+2}\, dx \,{1\over (s+t)^2} \,ds dt \\
 &\leq C\int_0^{t_j} \int_0^{t_j} \int_{t_j}^\infty  \frac{1}{r^{m+2}} r^{m-1}\, dr \,ds dt +C\int_0^{t_j} \int_{t_j}^\infty  {1\over (s+t)^2} \,ds dt \\
 &\leq C,
\end{align*} 
where in the last inequality, we use polar coordinates to estimate the first term and we use the following fact for the second term
$$\int_{d(x,y)\geq 2t_j}  \frac{1}{(s+t)^m}\Big(\frac{s+t}{s+t+d(x,y)}\Big)^{m+2}\, dx\leq C.$$

Next we consider the term $E_2$. Note that
\begin{align*}
E_2&\leq C\int_0^{t_j} \int_0^{t_j}   \int_{d(x,y)\geq 2t_j}\frac{1}{|x|^{m-2}|y|^{m-2}}\Big(\frac{1}{|x|+|y|}\Big)^{n+2}\,dx \,\, ds dt \\
&\quad +C \int_0^{t_j} \int_{t_j}^\infty   \int_{d(x,y)\geq 2t_j}\frac{1}{(s+t)^n|x|^{m-2}|y|^{m-2}}\Big(\frac{s+t}{s+t+|x|+|y|}\Big)^{n+2}\,dx \,{1\over (s+t)^2}\, ds dt \\
&=:E_{21}+E_{22}.
\end{align*} 
As for $E_{21}$, we first suppose $t_j\geq1$.
Then by
noting that $d(x,y)\leq |x|+|y|$ and that $|x|\geq1$ and $|y|\geq 1$, we have
$$ {1\over |x|^{m-2}} \leq {1\over d(x,y)^{m-2}} \quad {\rm or}\quad {1\over |y|^{m-2}} \leq {1\over d(x,y)^{m-2}},$$
which implies that
$$ \frac{1}{|x|^{m-2}|y|^{m-2}} \leq {1\over d(x,y)^{m-2}}. $$
As a consequence,
\begin{align*}
E_{21}&\leq C\int_0^{t_j} \int_0^{t_j}   \int_{d(x,y)\geq 2t_j}\frac{1}{d(x,y)^{m-2}}\Big(\frac{1}{d(x,y)}\Big)^{n+2}\,dx \,\, ds dt \\
& \leq C\int_0^{t_j} \int_0^{t_j}\int_{t_j}^\infty {1\over r^{m+n}} r^{m-1}\, drdsdt \leq C {t_j^2\over t_j^n}\leq C.
\end{align*} 
We now suppose $t_j<1$. Then it is direct that
\begin{align*}
E_{21}&\leq C\int_0^{1} \int_0^{1}   \int_{d(x,y)\geq 2t_j} \frac{1}{|x|^{m-2}|y|^{m-2}}\Big(\frac{1}{|x|+|y|}\Big)^{n+2}\,dx \,\, ds dt \\
& \leq C \int_{\mathbb{R}^m\backslash K} \frac{1}{|x|^{m-2}}\Big(\frac{1}{|x|}\Big)^{n+2}\,dx \leq C.
\end{align*} 
As for $E_{22}$, again, noting that $d(x,y)\leq |x|+|y|$, we have
\begin{align*}
E_{22}&\leq C \int_0^{t_j} \int_{t_j}^\infty   \int_{d(x,y)\geq 2t_j}\frac{1}{(s+t)^n|x|^{m-2}|y|^{m-2}}\Big(\frac{s+t}{s+t+|x|+|y|}\Big)^{n+2}\,dx \,{1\over (s+t)^2}\, ds dt \\
&\leq C \int_0^{t_j} \int_{t_j}^\infty   \int_{d(x,y)\geq 2t_j}\frac{1}{(s+t)^n|x|^{m-2}|y|^{m-2}}\Big(\frac{s+t}{s+t+|x|+|y|}\Big)^{n}\,dx \,{1\over (s+t)^2}\, ds dt \\
&\leq C \int_0^{t_j} \int_{t_j}^\infty   \int_{d(x,y)\geq 2t_j}\frac{1}{|x|^{m-2}|y|^{m-2}}\Big(\frac{1}{|x|+|y|}\Big)^{n}\,dx \,{1\over (s+t)^2}\, ds dt \\
&\leq C \int_0^{t_j} \int_{t_j}^\infty   \int_{\mathbb R^m\backslash K}\frac{1}{|x|^{m-2}}\Big(\frac{1}{|x|}\Big)^{n}\,dx \,{1\over (s+t)^2}\, ds dt \\
&\leq C \int_0^{t_j} \int_{t_j}^\infty    \,{1\over (s+t)^2}\, ds dt \\
&\leq C.
\end{align*} 

Combining the estimates of $E_{22}$, $E_{12}$ and $E_1$, we obtain that the claim \eqref{eI122claim} holds.
As a consequence, from \eqref{eI122} we obtain that for each $j$,
\begin{align*} 
\int_{(8Q_j)^c} |\frak{M}(\sqrt L) \big(I- e^{-t_j\sqrt{L}}\big) b_{j}(x) | dx
\leq \int_{Q_j}\ | b_{j}(y)|dy\leq C\lambda |Q_j|,
\end{align*} 
which implies that
\begin{align*}
I_{122}
&\leq {C\over \lambda} \sum_j \int_{(8Q_j)^c} |\frak{M}(\sqrt L) \big(I- e^{-t_j\sqrt{L}}\big) b_{j}(x) | dx\\
&\leq C\sum_j|Q_j| \leq {C\over \lambda} \|f_1\|_{L^1(\mathbb{R}^m\sharp\mathcal{R}^n)}.
\end{align*} 

We now consider the term $I_{123}$. Note that for each $i\in \mathcal{I}_2$ we have $t_i\geq 1/2$.   Fix $i\in \mathcal{I}_2$. Denote by $k_{\frak{M}(\sqrt{L})}(x,y)$
 the associated kernel of $\frak{M}(\sqrt{L})$. For $x\in (\mathbb{R}^m\backslash K)\backslash 8Q_i$ and $y\in Q_i$, by point 5 in Theorem \ref{theorem poisson} we have
$$
\begin{aligned}
|k_{\frak{M}(\sqrt{L})}(x,y)|&\leq \int_0^\infty |P_{t}(x,y)|\frac{dt}{t}\\
&\leq \int_0^\infty\frac{C}{t^{m+1}}\Big(\frac{t}{t+d(x,y)}\Big)^{m+1}dt+ \int_0^\infty\frac{C}{t^{n+1}|x|^{m-2}|y|^{m-2}}\Big(\frac{t}{t+|x|+|y|}\Big)^{n+1}dt\\
&=:K_1(x,y)+K_2(x,y).
\end{aligned}
$$
Since $d(x,y)\sim d(x,x_{Q_i})$, we have
$$
\begin{aligned}
K_1(x,y)&\leq \int_0^\infty\frac{C}{t^{m+1}}\Big(\frac{t}{t+d(x,x_{Q_i})}\Big)^{m+1}dt\\
&\leq \int_0^{d(x,x_{Q_i})}\frac{C}{d(x,x_{Q_i})^{m+1}}dt+\int^\infty_{d(x,x_{Q_i})}\frac{C}{t^{m+1}}dt\\
&\leq \frac{C}{d(x,x_{Q_i})^{m}}.
\end{aligned}
$$
Using the fact that $|x||y|\gtrsim |x|+|y|\gtrsim d(x,y)\gtrsim d(x,x_{Q_i})$ we have
$$
\begin{aligned}
K_2(x,y)&\leq \int_0^\infty\frac{C}{t^{n+1}d(x,x_{Q_i})^{m-2}}\Big(\frac{t}{t+d(x,x_{Q_i})}\Big)^{n+1}dt\\
&\leq \int_0^{d(x,x_{Q_i})}\frac{C}{d(x,x_{Q_i})^{m+n-1}}dt+\int^\infty_{d(x,x_{Q_i})}\frac{C}{t^{n+1}d(x,x_{Q_i})^{m-2}}dt\\
&\leq \frac{C}{d(x,x_{Q_i})^{m+n-2}}\\
&\leq \frac{C}{d(x,x_{Q_i})^{m}}
\end{aligned}
$$
where in the last inequality we used $d(x,x_{Q_i})\geq 2t_i\geq 1$.

From the estimates of $K_1(x,y)$ and $K_2(x,y)$, for each $i\in \mathcal{I}_2$ and $x\in (\mathbb{R}^m\backslash K)\backslash 8Q_i$ we have
\[
\sup_{y\in Q_i}|k_{\frak{M}(\sqrt{L})}(x,y)|\leq \frac{C}{d(x,x_{Q_i})^{m}}.
\]
Moreover, observe that since $i\in \mathcal{I}_2$ and $x\in (\mathbb{R}^m\backslash K)\backslash 8Q_i$ we have
$$
\frac{1}{d(x,x_{Q_i})^{m}}\sim \frac{1}{|x|^{m}}.
$$
As a consequence,
for each $i\in \mathcal{I}_2$ and $x\in (\mathbb{R}^m\backslash K)\backslash 8Q_i$ we have
\[
\sup_{y\in Q_i}|k_{\frak{M}(\sqrt{L})}(x,y)|\leq \frac{C}{|x|^{m}}.
\]
This implies that for each $x\in (\mathbb{R}^m\backslash K) \backslash \cup_i 8Q_i$, we have
\[
\big|\sum_{i\in \mathcal{I}_2} \frak{M}(\sqrt L) b_{i}  (x) \big|\le C\frac{\sum_{i\in \mathcal{I}_2}\|b_i\|_{L^1(\mathbb{R}^m\sharp\mathcal{R}^n)}}{|x|^m}.
\]
Therefore,
\begin{align*}
	I_{123}&\leq \left|\left\{ x\in (\mathbb{R}^m\backslash K) \backslash \cup_i 8Q_i: \big|\sum_{i\in \mathcal{I}_2} \frak{M}(\sqrt L) b_{i}  (x) \big|>{\lambda\over18} \right\}\right|\\
	&\leq \left|\left\{ x\in (\mathbb{R}^m\backslash K) \backslash \cup_i 8Q_i: C|x|^{-m}\left(\sum_{i\in \mathcal{I}_2}\|b_i\|_{L^1(\mathbb{R}^m\sharp\mathcal{R}^n)}\right)>{\lambda\over18} \right\}\right|\\
	&\leq C\frac{\sum_{i\in \mathcal{I}_2}\|b_i\|_{L^1(\mathbb{R}^m\sharp\mathcal{R}^n)}}{\lambda}\\
	&\leq C\frac{\|f\|_{L^1(\mathbb{R}^m\sharp\mathcal{R}^n)}}{\lambda}.
\end{align*} 
Combining all cases of $I_{11}$, $I_{13}$, $I_{121}$, $I_{122}$ and $I_{123}$, we obtain that 
\begin{align*}
I_{1}\leq {C\over \lambda} \|f_1\|_{L^1(X)}.
\end{align*}

\subsection{Estimate of $I_2$}

We now consider the term $I_2$. Note that in the case, $x$ is in the large end $\mathbb{R}^m\backslash K$ and
the function $f_2$ is supported in the small end $\mathcal{R}^n\backslash K$, and hence the measure will become 
non-doubling since if we enlarge a ball contained in  $\mathcal{R}^n\backslash K$, then the enlargement can be
partially contained in $\mathbb{R}^m\backslash K$. The standard Calder\'on--Zygmund decomposition on non-homogeneous space such as in \cite{NTV2, To2} does not apply since in that decomposition, we only know the existence of a sequence of  Calder\'on--Zygmund cubes but we do not know where they are exactly. And the Poisson kernel upper bound depends heavily on the position of the variables $x$ and $y$ in different ends.

Thus, to deal with this case, we use a Whitney type decomposition of the level set $\Omega$ below and then we 
make clever use of the Poisson kernel upper bound in this case to handle the weak type estimate, without enlarging those cubes, which avoids the case of non-doubling measure.  The genesis of this approach is an adaptation of an idea from \cite{NTV2}.

Note that $f_2$ is supported in $\mathcal{R}^n\backslash K$. We now split $\mathcal{R}^n\backslash K$ into two parts according to
$f_2$. 
Define
$$ F:= \{x\in \mathcal{R}^n\backslash K: M_2(f_2)\leq \lambda\}$$
and 
$$\Omega:= \{x\in \mathcal{R}^n\backslash K: M_2(f_2)> \lambda\}, $$
where $M_2$ is the Hardy--Littlewood maximal function defined on $\mathcal{R}^n\backslash K$.

Then we define
$$ f_{2,\lambda}(x):=f_2(x) \chi_{ F}(x)\quad{\rm and}\quad  f_{2}^{\lambda}(x):=f_2(x) \chi_{ \Omega}(x).$$
Then we have
\begin{align*}
I_2&\leq \left|\left\{ x\in \mathbb{R}^m\backslash K: |\frak{M}(\sqrt L)f_{2,\lambda} (x) |>{\lambda\over6} \right\}\right|\\
&\quad +  \left|\left\{ x\in \mathbb{R}^m\backslash K: |\frak{M}(\sqrt L)f_2^\lambda (x) |>{\lambda\over6} \right\}\right|\\
&=: I_{21}+I_{22}.
\end{align*} 

As for $I_{21}$, by using the $L^2$ boundedness of $m(\sqrt L)$, we obtain that
\begin{align*}
I_{21}&= |\{ x\in \mathbb{R}^m\backslash K: |\frak{M}(\sqrt L)f_{2,\lambda} (x) |>{\lambda\over6} \}|\\
&\leq {C\over \lambda^2} \|f_{2,\lambda}\|_{L^2(\mathcal{R}^n\backslash K)}^2\\
&\leq {C\over \lambda} \|f\|_{L^1(\mathbb{R}^m\sharp\mathcal{R}^n)},
\end{align*} 
where we use the fact that $|f_{2,\lambda}(x)|=|f_2(x)| \chi_{ F}(x)\leq |M_2(f_2)(x)| \chi_{ F}(x) \leq \lambda$.  

As for $I_{22}$, we consider the function $f_2^\lambda$. We now apply a covering lemma in \cite{CW} (see also \cite[Lemma 5.5]{DKP}) for the set $\Omega$ in the homogeneous space $\mathcal{R}^n$ to obtain a collection of balls $\{Q_{i}:=B(x_i,r_{i}): x_{i}\in \Omega, r_i=d(x_i,\Omega^c)/2, i=1,\ldots\}$ so that
\begin{enumerate}[{\rm (i)}]
	\item $\displaystyle \Omega=\cup_i Q_i$;
	\item $\displaystyle  \{B(x_i,r_i/5)\}_{i=1}^\infty$ are disjoint;
	\item there exists a universal constant $C$ so that $\sum_k \chi_{Q_k}(x)\le C$ for all $x\in \Omega$.
\end{enumerate}
Hence, we can further decompose
$$ f_2^\lambda(x) =  \sum_i f_{2,i}^\lambda(x),$$
where $f_{2,i}^\lambda(x) = \frac{\chi_{Q_i}(x)}{\sum_k \chi_{Q_k}(x)}f_{2}^\lambda(x)$.

Next, note that for $x\in \mathbb{R}^m\backslash K$,
\begin{align*}
|\frak{M}(\sqrt L)(f_{2,i}^\lambda) (x)| & = \bigg|\int_0^\infty t\sqrt L \exp(-t\sqrt L)(f_{2,i}^\lambda)(x)  \tilde{m}(t) {dt\over t}\,  \bigg|\\
&\leq \int_0^\infty \int_{Q_i} |P_{t,1}(x,y)| \, |\tilde{m}(t)| \, |f_{2,i}^\lambda(y)|\, dy\frac{dt}{t}.
\end{align*} 
Applying the upper bound in point 4 in Theorem \ref{theorem poisson} for $ |P_{t,1}(x,y)| $ we obtain that
\begin{equation}\label{eq- E123}
\begin{aligned}
|\frak{M}(\sqrt L)(f_{2,i}^\lambda) (x)| &\leq C\int_0^\infty \int_{Q_i} \bigg(\frac{t}{(t+d(x,y))^{m+1}}+ \frac{1}{|x|^{m-2}}\frac{t}{(t+d(x,y))^{n+1}}\\
&\hskip3cm+\frac{1}{|y|^{n-2}}\frac{t}{(t+d(x,y))^{m+ 1}}\bigg)\, |f_{2,i}^\lambda(y)|\, dy\frac{dt}{t}.
\end{aligned} 
\end{equation}
Note that $d(x,y)\approx |x|+|y|$ since $x\in \mathbb{R}^m\backslash K$ and  $y\in \mathcal{R}^n\backslash K$. Hence,
\[
\begin{aligned}
\int_0^\infty\bigg(\frac{t}{(t+d(x,y))^{m+1}}&+ \frac{1}{|x|^{m-2}}\frac{t}{(t+d(x,y))^{n+1}}+\frac{1}{|y|^{n-2}}\frac{t}{(t+d(x,y))^{m+ 1}}\bigg)\frac{dt}{t}\\
&\le C\int_0^\infty\bigg(\frac{t}{(t+|x|)^{m+1}}+ \frac{1}{|x|^{m-2}}\frac{t}{(t+|x|)^{n+1}}\bigg)\frac{dt}{t}.
\end{aligned}
\]
Splitting this into two integrals over $(0,|x|)$ and $(|x|,\infty)$, by a straightforward calculation we have
\begin{equation}\label{eq-I2}
\begin{aligned}
\int_0^\infty\bigg(\frac{t}{(t+d(x,y))^{m+1}}+ \frac{1}{|x|^{m-2}}\frac{t}{(t+d(x,y))^{n+1}}+\frac{1}{|y|^{n-2}}\frac{t}{(t+d(x,y))^{m+ 1}}\bigg)\frac{dt}{t}\le \frac{C}{|x|^m}.
\end{aligned}
\end{equation}
Inserting into \eqref{eq- E123}, we have  
\begin{align*}
|\frak{M}(\sqrt L)(f_{2,i}^\lambda) (x)| &\leq  {C\over |x|^m} \ \int_{Q_i}  |f_{2,i}^\lambda(y)| dy,
\end{align*} 
which implies that
\begin{align*}
|\frak{M}(\sqrt L)(f_{2}^\lambda) (x)| &\leq  \sum_i |\frak{M}(\sqrt L)(f_{2,i}^\lambda) (x)|\\
&\leq  C{1\over |x|^m} \sum_i \int_{Q_i}  |f_{2,i}^\lambda(y)| dy\\
&\leq C{1\over |x|^m} \|f_2\|_{L^1(\mathbb{R}^m\sharp\mathcal{R}^n)}.
\end{align*} 
Hence, we obtain that
\begin{align*}
I_{22}&=  \left|\left\{ x\in \mathbb{R}^m\backslash K: |\frak{M}(\sqrt L)f_2^\lambda (x) |>{\lambda\over6} \right\}\right|\\
&\leq  \left|\left\{ x\in \mathbb{R}^m\backslash K: C{1\over |x|^m} \|f_2\|_{L^1(\mathbb{R}^m\sharp\mathcal{R}^n)}>{\lambda\over6} \right\}\right|\\
&=  \left|\left\{ x\in \mathbb{R}^m\backslash K:\ |x|^m< {6C\|f_2\|_{L^1(\mathbb{R}^m\sharp\mathcal{R}^n)} \over \lambda} \right\}\right|\\
&\leq {C\|f_2\|_{L^1(\mathbb{R}^m\sharp\mathcal{R}^n)} \over \lambda}\\
&\leq {C\|f\|_{L^1(\mathbb{R}^m\sharp\mathcal{R}^n)} \over \lambda}.
\end{align*}

\subsection{Estimate of $I_3$}

For the term $I_3$, we point out that we can handle this case by using the same approach as in the estimates for the term $I_1$ with minor modifications and hence we omit the details.

\subsection{Estimate of $II_1$}

For the term $II_1$, we point out that we can handle this case by using similar way as in the estimates for the term $I_2$.
We sketch the proof as follows.

Define
$ F:= \{x\in \mathbb{R}^m\backslash K: M_1(f_1)\leq \lambda\}$
and 
$\Omega:= \{x\in \mathbb{R}^m\backslash  K: M_1(f_1)> \lambda\}, $
where $M_1$ is the Hardy--Littlewood maximal function defined on $\mathbb{R}^m\backslash K$.
Then let
$ f_{1,\lambda}(x):=f_1(x) \chi_{ F}(x)$ and $f_{1}^{\lambda}(x):=f_1(x) \chi_{ \Omega}(x).$

Then we have
\begin{align*}
II_1&\leq \left|\left\{ x\in \mathcal{R}^n\backslash K: |\frak{M}(\sqrt L)f_{1,\lambda} (x) |>{\lambda\over6} \right\}\right| +  \left|\left\{ x\in \mathcal{R}^n\backslash K: |\frak{M}(\sqrt L)f_1^\lambda (x) |>{\lambda\over6} \right\}\right|\\
&=: II_{11}+II_{12}.
\end{align*} 
Using the $L^2$ boundedness of $\frak{M}(\sqrt L)$ and the fact that $|f_{1,\lambda}(x)|\leq \lambda$, we obtain 
$
II_{11}\leq {C\over \lambda} \|f\|_{L^1(\mathbb{R}^m\sharp\mathcal{R}^n)}.
$

As for $II_{12}$, by using the Whitney decomposition, we obtain 
$ \Omega=\bigcup_i Q_i $
such that
$\sum_i|Q_i| =|\Omega|$, which gives
$$ f_1^\lambda(x) =  \sum_i f_{1,i}^\lambda(x),$$
where $f_{1,i}^\lambda(x) = f_{1}^\lambda(x)\chi_{Q_i}(x)$.

Next, for $x\in \mathcal{R}^n\backslash K$,
\begin{align*}
|\frak{M}(\sqrt L)(f_{1,i}^\lambda) (x)| 
\leq \int_0^\infty \int_{Q_i} |P_{t,1}(x,y)| \, |\tilde{m}(t)| \, |f_{1,i}^\lambda(y)|\, dy\frac{dt}{t}.
\end{align*} 
Applying the upper bound in point 4 in Theorem \ref{theorem poisson} for $ |P_{t,1}(x,y)| $ we obtain that
\begin{align*}
|\frak{M}(\sqrt L)(f_{1,i}^\lambda) (x)| &\leq C\int_0^\infty \int_{Q_i} \bigg(\frac{t}{(t+d(x,y))^{m+1}}+ \frac{1}{|y|^{m-2}}\frac{t}{(t+d(x,y))^{n+1}}\\
&\hskip3cm+\frac{1}{|x|^{n-2}}\frac{t}{(t+d(x,y))^{m+ 1}}\bigg)\, |f_{1,i}^\lambda(y)|\, dy\frac{dt}{t}\\
&\leq  {C\over |x|^n} \ \int_{Q_i}  |f_{1,i}^\lambda(y)| dy,
\end{align*} 
where the last inequality follows from similar estimates for \eqref{eq-I2}. This implies that
\begin{align*}
|\frak{M}(\sqrt L)(f_{1}^\lambda) (x)| &\leq  \sum_i |m(\sqrt L)(f_{1,i}^\lambda) (x)|
\leq  C{1\over |x|^n} \sum_i \int_{Q_i}  |f_{1,i}^\lambda(y)| dy
\leq C{1\over |x|^n} \|f_1\|_{L^1(\mathbb{R}^m\sharp\mathcal{R}^n)}.
\end{align*} 
Hence, we obtain that
\begin{align*}
II_{12}&=  \left|\left\{ x\in \mathcal{R}^n\backslash K: |\frak{M}(\sqrt L)f_1^\lambda (x) |>{\lambda\over6} \right\}\right|\leq  \left|\left\{ x\in \mathcal{R}^n\backslash K: C{1\over |x|^n} \|f_1\|_{L^1(\mathbb{R}^m\sharp\mathcal{R}^n)}>{\lambda\over6} \right\}\right|\\
&=  \left|\left\{ x\in \mathcal{R}^n\backslash K:\ |x|^n< {6C\|f_1\|_{L^1(\mathbb{R}^m\sharp\mathcal{R}^n)} \over \lambda} \right\}\right|\\
&\leq {C\|f\|_{L^1(\mathbb{R}^m\sharp\mathcal{R}^n)} \over \lambda}.
\end{align*}

\subsection{Estimate of $II_2$}

We will apply a similar approach as that in \cite{DMc} and using similar estimates for the term $I_1$ in our Section 3.1 to estimate $II_2$.

We restrict the setting to $\mathcal{R}^n\backslash K$, and $f_2 $ is in $L^1(\mathcal{R}^n\backslash K)$.
We now extend $f_2$ to all of $\mathbb{R}^n$ by zero extension, i.e., define $f_2(x):=0$ when $x\in K$.

Similar to the Calder\'on--Zygmund decomposition in $I_1$, we get $$f_2 (x)= g(x) + \sum_jb_j(x)$$
with $\|g\|_{L^1(\mathbb{R}^m\sharp\mathcal{R}^n)}\leq \|f\|_{L^1(\mathbb{R}^m\sharp\mathcal{R}^n)}$ and $\|g\|_{L^\infty(\mathbb{R}^m\sharp\mathcal{R}^n)}\leq C\lambda$ and 
$$ b_j(x):=\bigg( f_2(x)- {1\over |Q_j|}\int_{Q_j}f_2(y) dy \bigg)\chi_{Q_j}(x).$$
Then we get
\begin{align*}
II_2&\leq \left|\left\{ x\in \mathcal{R}^n\backslash K: |\frak{M}(\sqrt L )g (x) |> {\lambda\over6} \right\}\right|\\
&\quad+  \left|\left\{ x\in (\mathcal{R}^n\backslash K) \backslash \cup_i 8Q_i: \big|\frak{M}(\sqrt L ) \big(\sum_i b_{i}\big)  (x) \big|>{\lambda\over6} \right\}\right|+ | \cup_i 8Q_i|\\
&=: II_{21}+II_{22}+II_{23}. 
\end{align*} 
By using the $L^2$ boundedness of $\frak{M}(\sqrt L )$ and the fact that $\|g\|_{L^\infty(X)}\leq C\lambda$, we obtain that
$II_{21}\leq {C\over \lambda} \|f\|_{L^1(\mathbb{R}^m\sharp\mathcal{R}^n)}.
$
Next, from the doubling condition in this case, we get that
$
II_{23}\leq  C \sum_i |Q_i|\leq  C{\|f\|_{L^1(\mathbb{R}^m\sharp\mathcal{R}^n)}\over \lambda}.
$
For the term $II_{22}$, we have
\begin{align*}
II_{22}&\leq  \left|\left\{ x\in (\mathcal{R}^n\backslash K) \backslash \cup_i 8Q_i: \big|\frak{M}(\sqrt L ) \big(\sum_{i} e^{-t_i\sqrt{ L }} b_{i}\big)  (x) \big|>{\lambda\over18} \right\}\right|\\
&\quad+ \left|\left\{ x\in (\mathcal{R}^n\backslash K) \backslash \cup_i 8Q_i: \Big|\frak{M}(\sqrt L ) \Big(\sum_{i} \big(I- e^{-t_i\sqrt{ L }}\big) b_{i}\Big)  (x) \Big|>{\lambda\over18} \right\}\right|\\
&=: II_{221}+II_{222},
\end{align*} 
where for each $i$, $t_i$ is the side length of the cube $Q_i$.
Note that the term $II_{222}$ can be handled similarly by  using the same approach as that for $I_{122}$ and using upper bound in point 6 in Theorem \ref{theorem poisson} for $ |P_{t,2}(x,y)|$, which yields that 
$II_{222}$  is bounded by $C{\|f\|_{L^1(\mathbb{R}^m\sharp\mathcal{R}^n)}\over \lambda}$.

As for $II_{221}$,  we now split all the $Q_i$'s into two groups: 
$$\mathcal{J}_1:=\{i: \text{none of the corners of  $Q_i$ is the origin}\},$$ 
and 
$$\mathcal{J}_2=\{i: \text{one of the corners of  $Q_i$ is the origin}\}.
$$
Similarly $I_{12}$, we need only to claim that
\begin{align}\label{claim221}
\Big\|\sum_{i\in \mathcal J_1} e^{-t_i\sqrt{L}} b_{i}\Big\|_{L^2(\mathbb{R}^m\sharp\mathcal{R}^n)}
\leq  C\lambda^{1\over2}\|f_2\|_{L^1(\mathbb{R}^m\sharp\mathcal{R}^n)}^{1\over2}.
\end{align}
To see this claim, it suffices to show the following 3 cases:
\begin{align}\label{claim221case1}
\Big\|\sum_{i\in \mathcal J_1} e^{-t_i\sqrt{L}} b_{i}\Big\|_{L^2(\mathbb{R}^m\backslash K)}
\leq  C\lambda^{1\over2}\|f_2\|_{L^1(\mathbb{R}^m\sharp\mathcal{R}^n)}^{1\over2},
\end{align}
\begin{align}\label{claim221case2}
\Big\|\sum_{i\in \mathcal J_1} e^{-t_i\sqrt{L}} b_{i}\Big\|_{L^2(\mathcal{R}^n\backslash K)}
\leq  C\lambda^{1\over2}\|f_2\|_{L^1(\mathbb{R}^m\sharp\mathcal{R}^n)}^{1\over2},
\end{align}
and
\begin{align}\label{claim221case3}
\Big\|\sum_{i\in \mathcal J_1} e^{-t_i\sqrt{L}} b_{i}\Big\|_{L^2(K)}
\leq  C\lambda^{1\over2}\|f_2\|_{L^1(\mathbb{R}^m\sharp\mathcal{R}^n)}^{1\over2}.
\end{align}
We now point out that \eqref{claim221case1} can be obtained by using similar estimates as those for \eqref{claim121case2} and that \eqref{claim221case3} can be obtained by using similar estimates as those for \eqref{claim121case3}.  We omit the details.

As for \eqref{claim221case2}, 
applying the upper bound in point 6 in Theorem \ref{theorem poisson} for $ |P_{t_i}(x,y)| $ we obtain that
\begin{align*}
|e^{-t_i\sqrt{ L}} b_{i}  (x)| &\leq \int_{\mathcal{R}^n\backslash K} |P_{t_i}(x,y)|\, |b_{i} (y)|dy\\
&\leq C\int_{\mathcal{R}^n\backslash K} \bigg({t_i\over (t_i+d(x,y))^{m+1}}+{t_i\over  (t_i+d(x,y))^{n+1}}\bigg)\, |b_{i}(y)|dy\\
&=: \mathcal F_{1,i} + \mathcal F_{2,i}.
\end{align*}
For the term $\mathcal F_{2,i}$, by using smilar technique of the sup--inf estimate as in the estimate for $F_{2,i}$
in Subsection 3.1, we obtain that
$ \big\langle \sum_{i}  \mathcal F_{2,i}, h\big\rangle \leq C\lambda \big\langle  M_2(h),  \sum_{i}  \chi_{Q_i}\big\rangle $ for any $h$ with $\|h\|_{L^2(\mathcal{R}^n\backslash K)}=1$,
which yields that 
\begin{align*}
\Big\|\sum_{i\in \mathcal J_1} F_{2,i}\Big\|_{L^2(\mathbb{R}^m\backslash K)} \leq  C\lambda^{1\over2} \|f_2\|_{L^1(\mathbb{R}^m\sharp\mathcal{R}^n)}^{1\over2}.
\end{align*}
For the term $\mathcal F_{2,i}$, we consider the position of $Q_i$, the support of $b_i$, as follows: if one of the corners of $Q_i$ is origin, then $t_i\geq {1\over2}$, since otherwise the function $f_2$ on $Q_i$ is zero which yields that this $Q_i$ can not be chosen from the Calder\'on--Zygmund decomposition; if none of the corners of
$Q_i$ is origin, then if $t_i<1$, $d(Q_i,0)\geq {1\over2}$.
Combining all these cases, we get that
$${t_i\over (t_i+d(x,y))^{m+1}}\leq C{t_i\over (t_i+d(x,y))^{n+1}},$$
which shows that 
\begin{align*}
\Big\|\sum_{i\in \mathcal J_1} F_{1,i}\Big\|_{L^2(\mathbb{R}^m\backslash K)} \leq C \Big\|\sum_{i\in \mathcal I_1} F_{2,i}\Big\|_{L^2(\mathbb{R}^m\backslash K)} \leq  C\lambda^{1\over2} \|f_2\|_{L^1(\mathbb{R}^m\sharp\mathcal{R}^n)}^{1\over2}.
\end{align*}

\subsection{Estimate of $II_3$, $III_1$, $III_2$, $III_3$}
We point out that the estimates of $II_3$ follows from the upper bound in point 3 in Theorem \ref{theorem poisson} for $ |P_{t,1}(x,y)| $ and from similar estimates as for $I_3$
in Subsection 3.3. The estimates of $III_1$ and $III_2$ can be obtained by using similar techniques as in $I_3$ and $II_3$, respectively.  $III_3$ can also be obtained using similar approaches as in $II_3$. We omit the details here.

\bigskip
{\bf Acknowledgement:}  T. A. Bui, X. T. Duong and J. Li are supported by the Australian Research Council through the 
research grant ARC DP 160100153.  B. D. Wick supported in part by National Science Foundation grant DMS \# 1560955.
The authors would like to thank the referee for suggestions to improve the presentation of the paper.

\end{document}